\newtheorem{theorem}{Theorem}[section]
\newtheorem{lemma}[theorem]{Lemma}
\newtheorem{conjecture}[theorem]{Conjecture}
\newtheorem{proposition}[theorem]{Proposition}
\newtheorem{prop}[theorem]{Proposition}
\newtheorem{corollary}[theorem]{Corollary}
\theoremstyle{definition}
\newtheorem{defn}[theorem]{Definition}
\newtheorem{definition}[theorem]{Definition}
\newtheorem{eg}[theorem]{Example}
\newtheorem{example}[theorem]{Example}
\newtheorem{question}[theorem]{Question}
\newtheorem{rmk}[theorem]{Remark}
\newtheorem{remark}[theorem]{Remark}
\newcommand{\R}{\mathbb R}
\newcommand{\jac}{\text{Jac}}
\def\been{\begin{enumerate}}
\def\enen{\end{enumerate}}
\title[Identifiability of linear compartmental models]{Identifiability of linear compartmental models: \\
the singular locus}
\author[Gross]{Elizabeth Gross}
\address{University of Hawai`i at M\={a}noa}
\author[Meshkat]{Nicolette Meshkat}
\address{Santa Clara University}
\author[Shiu]{Anne Shiu}
\address{Texas A\&M University}
\date{28 June 2021}
\begin{document}
\maketitle


\begin{abstract}
This work addresses the problem of identifiability, that is, the question of whether parameters can be recovered from data, for linear compartmental models. 
Using standard differential algebra techniques, the question of whether a given model is generically locally identifiable is equivalent to asking whether the Jacobian matrix of a certain coefficient map, arising from input-output equations, is generically full rank. A natural next step is to study the set of parameter values where the Jacobian matrix drops in rank, which we refer to as the \emph{locus of non-identifiable parameter values}, or, for short, the \emph{singular locus}.
 In this work, we give a formula for coefficient maps in terms of acyclic subgraphs of the 
model's underlying 
directed graph and, then, study the case when the singular locus is defined by a single equation, the \emph{singular-locus equation}. We prove that the singular-locus equation can be used to determine when submodels are generically locally identifiable. We also determine the singular-locus equation for two families of linear compartmental models,  cycle and mammillary (star) models with input and output in a single compartment. We also state a conjecture for the corresponding equation for a third family: catenary (path) models.
Finally, we introduce the {\em identifiability degree}, which is the number of parameter values that map to a generic input-output data vector.  This degree was previously computed for mammillary and catenary models, and here we determine this degree for cycle models.   \newline
\par
\noindent \emph{Key words:} Identifiability, Linear compartmental models, Singular locus, Mammillary, Catenary \newline
\par
\noindent \emph{AMS Subject Classification:} 13P15, 13P25, 34A30, 34A55, 80A30, 92C45 
\end{abstract}

\maketitle



\section{Introduction} \label{sec:intro}
This work focuses on the identifiability problem for linear compartmental models. Linear compartmental models 
are used extensively in
biological applications, such as pharmacokinetics, toxicology, cell biology, physiology, and ecology \cite{Berman1956,Berman1962,distefano-book,godfrey,Mulholland1974}. Indeed, these models 
are now ubiquitous
in pharmacokinetics, with most kinetic parameters for drugs (half-lives, residence times, and so on) based at least in part on linear compartmental model theory \cite{Tozer1981,Wagner1981}.

A mathematical model is {\em identifiable} if its parameters can be recovered from data.  Using standard differential algebra techniques, the question of whether a given linear compartmental model is (generically locally) identifiable is equivalent to asking whether the Jacobian matrix of the {\em coefficient map} (arising from certain {\em input-output equations}) is generically full rank. This work is focused on the set of parameter values where the Jacobian matrix of the coefficient map drops in rank.  This set is an algebraic subvariety of the parameter space, which we call the \emph{locus of non-identifiable parameter values}, or, for short, the \emph{singular locus}.  While the singular locus can be informative in respect to establishing identifiable parameters values and submodels, to the best of our knowledge, this variety associated to linear compartmental models has not yet been studied in depth. 

If the generic rank of the Jacobian matrix of the coefficient map is $k$, then the singular locus is defined by all $(k+1) \times (k+1)$ minors of the Jacobian. However, we find that, in the case of several popular biological models, the singular locus is defined by a single equation.  We call this  
equation the \emph{equation of the singular locus}.  The equation of the singular locus allows us to fully understand the set of identifiable parameters, enabling us to give biologically relevant information regarding the identifiability of parameters. For example, for the cycle and mamillary models studied in Section \ref{sec:singular}, the singular-locus equation implies that every parameter value with nonzero and unique entries is identifiable.  Furthermore, in more general settings, we can use the singular-locus equation to find generically locally identifiable submodels.  Indeed, using the singular-locus equation to give information on which edges of a model can be deleted while preserving identifiability (Theorem~\ref{thm:delete}) is our first main result.

Identifiable submodels have been studied by Vajda and others \cite{vajda1984, vajdaetal}, and the exploration of when a submodel of an identifiable model is identifiable is important both on the theoretical side and on the applied side.  On the theoretical side, this problem is one step in addressing the large, overarching question in regards to linear compartmental models: {\em 
Can we determine whether a model is identifiable by simply inspecting its underlying directed graph?} On the applied side, the operation of removing an edge can correspond to a biological intervention, such as a genetic knockout or a drug that inhibits a specific activity.

After looking at the role the singular locus has in determining identifiable submodels, we move toward a combinatorial inspection of the coefficient maps of linear compartmental models. Understanding the coefficient map is a first step in understanding the singular locus for a given model. Recently, Meshkat, Sullivant, and Eisenberg gave a general formula for the input-output equations
of linear compartmental models~\cite{MeshkatSullivantEisenberg}.  
Our second main result takes their formula for the coefficient map~\cite{MeshkatSullivantEisenberg} and recasts it in terms of combinatorial properties of the model, namely in terms of forests (acyclic subgraphs) in the associated directed graph  (Theorem~\ref{thm:coeff-i-o-general}).  This formula allows us to determine the singular-locus equation for the models explored in the later sections.

Finally, our remaining results pertain to three well-known families of linear compartmental models, which are depicted in Figures~\ref{fig:cat} and~\ref{fig:cycle}: catenary (path graph) models, mammillary (star graph) models, and cycle models~\cite{godfrey}.  

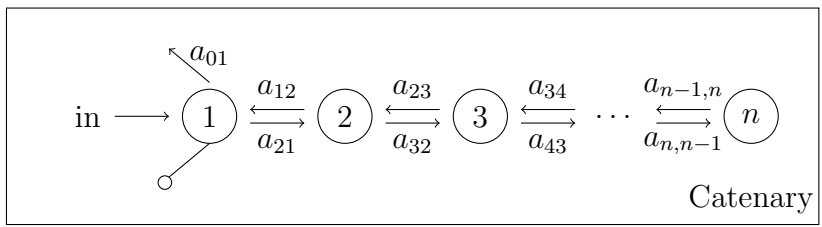
\begin{figure}[ht]
\begin{center}
	\begin{tikzpicture}[scale=1.8]
 	\draw (0,0) circle (0.2);	
 	\draw (1,0) circle (0.2);	
 	\draw (2,0) circle (0.2);	
 	\draw (4,0) circle (0.2);	
    	\node[] at (0, 0) {1};
    	\node[] at (1, 0) {$2$};
    	\node[] at (2, 0) {$3$};
    	\node[] at (3, 0) {$\dots$};
    	\node[] at (4, 0) {$n$};
	 \draw[<-] (0.3, 0.05) -- (0.7, 0.05);	
	 \draw[->] (0.3, -0.05) -- (0.7, -0.05);	
	 \draw[<-] (1.3, 0.05) -- (1.7, 0.05);	
	 \draw[->] (1.3, -0.05) -- (1.7, -0.05);	
	 \draw[<-] (2.3, 0.05) -- (2.7, 0.05);	
	 \draw[->] (2.3, -0.05) -- (2.7, -0.05);	
	 \draw[<-] (3.3, 0.05) -- (3.7, 0.05);	
	 \draw[->] (3.3, -0.05) -- (3.7, -0.05);	
   	 \node[] at (0.5, 0.2) {$a_{12}$};
   	 \node[] at (1.5, 0.2) {$a_{23}$};
   	 \node[] at (2.5, 0.2) {$a_{34}$};
   	 \node[] at (3.5, 0.2) {$a_{n-1,n}$};
   	 \node[] at (0.5, -0.2) {$a_{21}$};
   	 \node[] at (1.5, -0.2) {$a_{32}$};
   	 \node[] at (2.5, -0.2) {$a_{43}$};
   	 \node[] at (3.5, -0.2) {$a_{n,n-1}$};
 	\draw (-0.33,-.49) circle (0.05);	
	 \draw[-] (0, -.2 ) -- (-0.3, -.45);	
	 \draw[->] (-0.7, 0) -- (-0.3, 0);	
   	 \node[] at (-.9, 0) {in};
	 \draw[->] (0, .25) -- (-0.3, .5);	
   	 \node[] at (0, 0.45) {$a_{01}$};
    	\node[above] at (4, -0.8) {Catenary};
\draw (-1.5,-.8) rectangle (4.5, .8);
	\end{tikzpicture}
\end{center}
\caption{The {\bf catenary} (path) model with $n$ compartments, in which compartment 1 has an input, output, and leak.  See Section~\ref{sec:model}.
}
\label{fig:cat}
\end{figure}

\begin{figure}[ht]
\begin{center}
	\begin{tikzpicture}[scale=1.8]
 	\draw (-1,0) circle (0.2);	
 	\draw (-0.5,0.866) circle (0.2);	
 	\draw (0.5,-0.866) circle (0.2);	
 	\draw (-0.5,-0.866) circle (0.2);	
	 \draw[->] (-0.85, -0.25) -- (-0.62, -0.64 );
 	 \draw[->]  (-0.2,-0.866) -- (0.2,-0.866) ;
	 \draw[->]  (0.62, -0.64) -- (0.9,-0.17); 
	 \draw[loosely dotted,thick] (0.9,0.17) -- (0.5,0.866);
	 \draw[->]  (0.2,0.866) -- (-0.2,0.866);
	 \draw[->] (-0.62, 0.64 ) -- (-0.85, 0.25)  ;
   	 \node[] at (0.0, -1.03) {$a_{32}$};
   	 \node[] at (0.0, 1.03) {$a_{n,n-1}$};
   	 \node[] at (-0.55, -0.42) {$a_{21}$};
   	 \node[] at (-0.55, 0.4) {$a_{1n}$};
   	 \node[] at (0.55, -0.42) {$a_{43}$};
    	\node[] at (-1, 0) {1};
    	\node[] at  (-0.5,-0.866) {$2$};
    	\node[] at  (+0.5,-0.866) {$3$};
    	\node[] at (-0.5,0.866) {$n$};
 	\draw (-1.33,-.49) circle (0.05);	
	 \draw[-] (-1, -.2 ) -- (-1.3, -.45);	
	 \draw[->] (-1.7, 0) -- (-1.3, 0);	
   	 \node[] at (-1.9, 0) {in};
	 \draw[->] (-1.1, .25) -- (-1.4, .5);	
   	 \node[] at (-1.1, 0.45) {$a_{01}$};
    	\node[above] at (-1.8, -1.5) {Cycle};
\draw (-2.2,-1.5) rectangle (1.4, 1.4);
 	\draw (3,0) circle (0.2);
 	\draw (5,-1) circle (0.2);
 	\draw (5,-0.3) circle (0.2);
 	\draw (5,1) circle (0.2);
    	\node[] at (3, 0) {1};
    	\node[] at (5, -1) {2};
    	\node[] at (5, -0.3) {3};
    	\node[] at (5, 0.3) {$\vdots$};
    	\node[] at (5, 1) {$n$};
	 \draw[->] (3.25, -.2) -- (4.65, -1);
	 \draw[<-] (3.25, -.1) -- (4.65, -0.9);
	 \draw[->] (3.25, -0.05) -- (4.65, -0.3);
	 \draw[<-] (3.25, 0.05) -- (4.65, -0.2);
	 \draw[->] (3.25, 0.1) -- (4.6, 0.9);
	 \draw[<-] (3.25, 0.2) -- (4.6, 1);
   	 \node[] at (4.1, -0.9) {$a_{21}$};
   	 \node[] at (4.57, -0.7) {$a_{12}$};
   	 \node[] at (4.57, -0.05) {$a_{13}$};
   	 \node[] at (4.1, -0.35) {$a_{31}$};
   	 \node[] at (4, 0.9) {$a_{1,n}$};
   	 \node[] at (4.5, 0.55) {$a_{n,1}$};
 	\draw (2.67,-.49) circle (0.05);	
	 \draw[-] (3, -.2 ) -- (2.7, -.45);	
	 \draw[->] (2.3, 0) -- (2.7, 0);	
   	 \node[] at (2.1, 0) {in};
	 \draw[->] (2.9, .25) -- (2.6, .5);	
   	 \node[] at (2.9, 0.45) {$a_{01}$};
\draw (1.7,-1.5) rectangle (5.5, 1.4);
    	\node[above] at (2.4, -1.5) {Mammillary};
	\end{tikzpicture}
\end{center}
\caption{Two models with $n$ compartments, where compartment 1 has an input, output, and leak.
{ Left:} The {\bf cycle}.  { Right:} The {\bf mammillary} (star). 
}
\label{fig:cycle}
\end{figure}
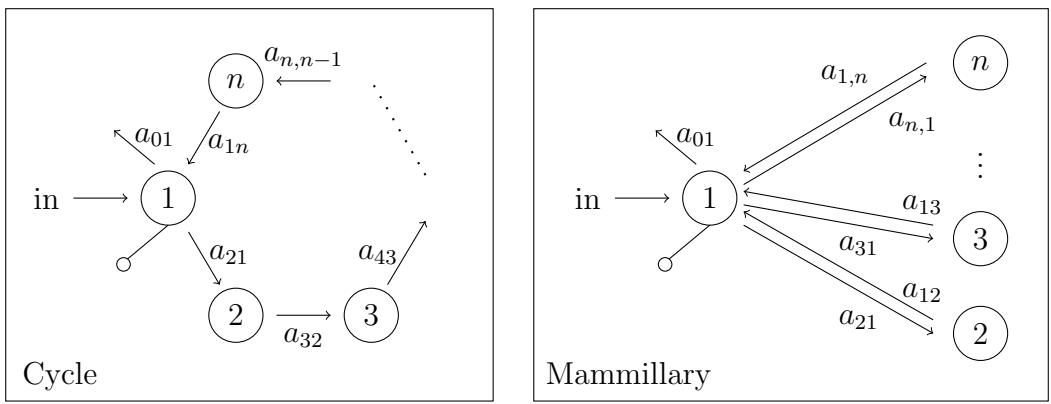

For these three families of models,
which are (generically locally) identifiable~\cite{cobelli-constrained-1979,MeshkatSullivant,MeshkatSullivantEisenberg},
we obtain results and a conjecture, summarized in Table~\ref{tab:summary}, on:
\begin{enumerate}
\item the equation of the singular locus, and
\item the {\em identifiability degree}: this degree is $m$ if exactly $m$ sets of parameter values map to a generic input-output data vector.
\end{enumerate}

\begin{table}[h]
\centering
\begin{tabular}{| l | c | c |}
\hline
{\bf Model }                                                & {\bf Equation of singular locus} & {\bf Identifiability} \\
 & & {\bf degree} \\
\hline
Catenary (path)    &       	
{\rm Conjecture:~}
$a_{12}^{n-1} 
	(a_{21} a_{23})^{n-2} 
	\dots
	(a_{n-1,n-2} a_{n-1,n})
$                      &         $1$
      \\ \hline
Cycle           &     	
$	a_{32}a_{43} \dots a_{n,n-1} a_{1,n}
	\prod_{2 \leq i < j \leq n}
		\left( a_{i+1,i} - a_{j+1,j}
			\right) $
                        &             $(n-1)!$    \\      \hline
Mammillary (star)  &    $	a_{12} a_{13} \dots a_{1,n} 
	\prod_{2 \leq i < j \leq n} \left(a_{1i} - a_{1j} \right)^2$
                         &         $(n-1)!$               \\ \hline
\end{tabular}
\caption{Summary of theorems, conjectures, and prior results on the 
linear compartmental models depicted in Figures~\ref{fig:cat} and~\ref{fig:cycle}.  
See Theorems~\ref{thm:mam},~\ref{thm:cycle}, and~\ref{thm:ident-degree-cycle}; 
Proposition~\ref{prop:prior-ident-degree}; 
and Conjecture~\ref{conj:cat}.
Note that the $n=2$ versions of these three models coincide, so their equations and degrees agree.
Also, note that the singular locus for all these models is a union of hyperplanes (e.g., $a_{12}=a_{13}$), 
including some coordinate hyperplanes (e.g., $a_{12}=0$).
}
\label{tab:summary}
\end{table}

The outline of our work is as follows.
In Section~\ref{sec:background}, we introduce linear compartmental models and define the singular locus. 
In Section~\ref{sec:submodels}, we prove our result on how the singular locus gives information on identifiable submodels.
In Section~\ref{sec:jac}, we give a new combinatorial formula for the coefficients of the input-output equations for linear compartmental models with input and output in a single compartment.
We use this formula to prove, in Sections~\ref{sec:singular}
and~\ref{sec:iddegree}, the results on the singular-locus equations and identifiability degrees mentioned above for the models in Figures~\ref{fig:cat} and~\ref{fig:cycle}.
We conclude with a discussion in Section~\ref{sec:discussion}.

\section{Background} \label{sec:background}
In this section, we recall linear compartmental models, their input-output equations, and the concept of identifiability. We also introduce the main focus our work: the locus of non-identifiable parameter values (the singular locus) and the equation that defines it.

\subsection{Linear compartmental models} \label{sec:model}

A {\em linear compartmental model} consists of a directed graph 
$G = (V,E)$ 
together with three sets $In, Out,$ $Leak \subseteq V$.
Each vertex $i \in V$ corresponds to a
compartment in the model and each edge $j \rightarrow i$ corresponds to  
a direct flow of material from the $j$-th compartment to the
$i$-th compartment. The sets 
$In, Out, Leak \subseteq V$ are the
sets of input compartments, output compartments, and leak compartments, 
respectively. We always assume that $Out \neq \emptyset$, as models without outputs are not identifiable.

Following the literature, we will indicate output compartments by this symbol: \begin{tikzpicture}[scale=0.7]
 	\draw (4.66,-.49) circle (0.05);	
	 \draw[-] (5, -.15) -- (4.7, -.45);	
\end{tikzpicture} .  Input compartments are labeled by ``in'', and leaks are indicated by outgoing edges.  
For instance, each of the linear compartmental models depicted in Figures~\ref{fig:cat} and~\ref{fig:cycle} have $In, Out, Leak =\{1\}$.

To each edge $j \rightarrow i$ of $G$, we associate
a 
parameter $a_{ij}$, the rate of flow
from compartment $j$ to compartment $i$.  
To each leak node $i \in Leak$, we associate a 
parameter $a_{0i}$, the rate of flow from compartment $i$ leaving the system.  Let $n=|V|$.
The {\em compartmental matrix} of a linear compartmental model $(G,In, Out, Leak)$ is the $n \times n$ matrix 
 $A$ 
 with entries given by:
\[
  A_{ij} 
  ~:=~ \left\{ 
  \begin{array}{l l l}
    -a_{0i}-\sum_{k: i \rightarrow k \in E}{a_{ki}} & \quad \text{if $i=j$ and } i \in Leak\\
        -\sum_{k: i \rightarrow k \in E}{a_{ki}} & \quad \text{if $i=j$ and } i \notin Leak\\
    a_{ij} & \quad \text{if $j\rightarrow{i}$ is an edge of $G$}\\
    0 & \quad \text{otherwise.}\\
  \end{array} \right.
\]

A linear compartmental model $(G, In, Out, Leak)$ defines a system of linear ODEs (with inputs $u_i(t)$) and 
outputs $y_{i}(t)$ as follows:
\begin{align} \label{eq:main}
x'(t) ~&=~ Ax(t)+u(t) \\ 
y_i(t) ~&=~ x_i(t) \quad \quad \mbox{ for } i \in Out~, \notag
\end{align}
 where $u_{i}(t) \equiv 0$ for $i \notin In$.
 
We now define the concepts of $\textit{strongly connected}$ and $\textit{inductively strongly connected}$. 

\begin{defn} \label{def:strongly-connected} ~
\begin{enumerate}
	\item A directed graph $G$ is \textit{strongly connected} if there exists a directed path from each vertex to every other vertex.  
	A directed graph $G$ is \textit{inductively strongly connected} with respect to vertex $1$ if
	there is an ordering of the vertices $1,\ldots,|V|$ that starts at vertex $1$ 
	such that each of the induced subgraphs $G_{\{1, \ldots, i\}}$ is strongly connected for $i = 1, \ldots, |V|$. 
	\item A linear compartmental model $(G, In, Out, Leak)$ is \textit{strongly connected} (respectively,  \textit{inductively strongly connected}) if $G$ is strongly connected (respectively, inductively strongly connected).
\end{enumerate}
\end{defn}

The two most common classes of 
compartmental models are \textit{mammillary} (star) and \textit{catenary} (path) model structures (see Figures~\ref{fig:cat} and~\ref{fig:cycle}).  Mammillary models consist of a central compartment surrounded by and connected with peripheral (noncentral) compartments, none of which are connected to each other \cite{distefano-book}.  Catenary models have all compartments arranged in a chain, with each connected (in series) only to its nearest neighbors \cite{distefano-book}.   In a typical pharmacokinetic application, the central compartment of a mammillary model consists of blood plasma and 
the peripheral compartments correspond to
highly perfused tissues in which a drug distributes rapidly.  For catenary models, the drug distributes more slowly. For examples of how mammillary and catenary models are used in practice, see \cite{distefano-book, godfrey, vicini}.

Another common class of 
compartmental models is formed by \textit{cycle} models (see Figure~\ref{fig:cycle}).  A cycle model consists of a single directed cycle. 
Cycle models are only strongly connected, while mammillary and catenary models are inductively strongly connected. For examples of cycle models, see \cite{audolydangio,egritoth,vajda1982}.

\subsection{Input-output equations} \label{sec:i-o}
 The {\em input-output equations} of a linear compartmental model are equations that hold along every solution of the ODEs~\eqref{eq:main}, and which involve only the parameters $a_{ij}$, input variables $u_i$, output variables $y_i$, and their derivatives.  
A general form of these equations was given by 
 Meshkat, Sullivant, and Eisenberg~\cite[Corollary 1]{MeshkatSullivantEisenberg} for the case of strongly connected models and a generalization was given in \cite[Proposition 2.3]{linear-i-o}. 
 The version of this result we state here is 
 for the case of one input and one output:
\begin{proposition}[Meshkat, Sullivant, and Eisenberg] \label{prop:MSE}
	Consider a linear compartmental model that 
	has an input in compartment $j$ and an output in compartment $i$ (and no other inputs or outputs).
	Let $A$ denote the compartmental matrix, 
	let $\partial$ be the differential operator $d/dt$, and let $(\partial I-A)_{ij}$ denote the submatrix of $(\partial I-A)$ obtained by removing row $i$ and column $j$.  Then an 
	input-output equation 
	is the following:
	\begin{align} \label{eq:i-o-general}
	\det (\partial I - A) y_i ~=~  \det \left( (\partial I-A)_{ij} \right) u_j~. 
	\end{align}
\end{proposition}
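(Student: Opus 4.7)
The plan is to treat the ODE system~\eqref{eq:main} as a linear system over the commutative polynomial ring $\R[a_{ij}][\partial]$ (the coefficients $a_{ij}$ are constants, so $\partial$ commutes with them) and then apply the adjugate identity. Rewriting~\eqref{eq:main} as $(\partial I - A)\,x(t) = u(t)$, where $u(t) = u_1(t)\,e_1$ because compartment $1$ is the only input, I would left-multiply by the classical adjugate $\operatorname{adj}(\partial I - A)$ to obtain
\begin{equation*}
\det(\partial I - A)\, x(t) ~=~ \operatorname{adj}(\partial I - A)\, u(t).
\end{equation*}
Reading off the first coordinate and using $u_j \equiv 0$ for $j \neq 1$, only the $(1,1)$-entry of the adjugate survives on the right-hand side. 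That entry is the $(1,1)$-cofactor $(-1)^{1+1}\det((\partial I - A)_{11}) = \det((\partial I - A)_{11})$, and on the left-hand side $x_1 = y_1$. This gives exactly~\eqref{eq:i-o-general}. Equivalently, one could apply Cramer's rule directly to solve for $x_1$ in the system $(\partial I - A)x = u_1 e_1$ and then clear denominators; expanding the numerator determinant along its first column (the only column with a nonzero entry, namely $u_1$ in row $1$) yields the same identity.

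For the ``lowest degree'' claim, the plan is to show that the two differential operators appearing in~\eqref{eq:i-o-general} share no common factor in $\Q(a_{ij})[\partial]$, so that no cancellation can reduce the order of the equation. The degree (in $\partial$) of the left-hand operator is $n$ and of the right-hand operator is $n-1$, so any common factor would in particular have to divide the characteristic polynomial $\det(\partial I - A)$. Here I would exploit the hypothesis that $G$ is strongly connected with at least one leak: this forces $\det(A) \neq 0$ generically (equivalently, the constant term of $\det(\partial I - A)$, viewed as a polynomial in $\partial$, is a nonzero polynomial in the $a_{ij}$), so $\partial$ itself is not a common factor. More generally, coprimality of $\det(\partial I - A)$ and $\det((\partial I - A)_{11})$ would follow from the fact that a common root $\lambda$ (over $\overline{\Q(a_{ij})}$) would mean that $\lambda$ is both an eigenvalue of $A$ and of the principal submatrix $A_{11}$; a Schur-complement/linear-algebra computation shows this does not happen generically for a strongly connected $A$ with a leak, because one can exhibit specific parameter choices (say, distinct negative leaks on every compartment) making the two polynomials coprime, and coprimality is an open condition on parameters.

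The main obstacle in this plan is the minimality half: the adjugate derivation is essentially one line, but proving that no common differential-operator factor exists requires a generic-coprimality argument that genuinely uses strong connectedness and the presence of a leak. I would isolate this as a separate lemma and reduce it to a statement about the resultant of $\det(\partial I - A)$ and $\det((\partial I - A)_{11})$ being a nonzero polynomial in the $a_{ij}$, verified by a well-chosen specialization (for instance, a diagonal-dominant specialization with distinct eigenvalues). The rest of the proof -- the existence and explicit form of the equation -- falls out immediately from Cramer's rule once the ODE is written in operator form.
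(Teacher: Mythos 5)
The paper does not prove this proposition at all: it is quoted verbatim (for the single input--output case) from the cited reference \cite{MeshkatSullivantEisenberg}, so there is no internal proof to compare against. Your adjugate/Cramer derivation of the identity \eqref{eq:i-o-general} is exactly the standard argument used in that reference: since $A$ has constant entries, $\partial$ commutes with them, $(\partial I - A)x = u_1 e_1$ is a linear system over the commutative ring $\R(a_{ij})[\partial]$, and reading off the first coordinate of $\operatorname{adj}(\partial I - A)\,u_1 e_1$ leaves only the $(1,1)$-cofactor $\det\left((\partial I - A)_{11}\right)$. That half is complete and correct.

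The part that is still only a plan is the ``lowest degree'' claim, and it is precisely the part that uses the hypotheses (strong connectedness, at least one leak). Two points need to be nailed down. First, you should justify why coprimality of $\det(\partial I - A)$ and $\det\left((\partial I - A)_{11}\right)$ in $\Q(a_{ij})[\partial]$ implies minimality: you need the statement that any relation $P(\partial)y_1 = Q(\partial)u_1$ valid for all solutions forces $\det(\partial I - A)\mid P$ once the displayed pair is coprime (equivalently, that the transfer function $\det((\partial I-A)_{11})/\det(\partial I - A)$ is already in lowest terms and its denominator degree is the order of the minimal i/o equation). Second, the generic-coprimality lemma itself needs an actual witness: ``a diagonal-dominant specialization with distinct eigenvalues'' is not by itself enough, since you must ensure the spectra of $A$ and of the principal submatrix $A_{11}$ are \emph{disjoint}, not merely that each is simple; a concrete choice (e.g.\ a symmetric specialization where strict Cauchy interlacing applies, or an explicit small resultant computation) would close this. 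Neither issue is a wrong turn --- you correctly isolate the coprimality statement as the crux --- but as written the minimality half is a reduction to an unproved lemma rather than a proof.
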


\begin{example} \label{ex:prop-MSE}
Consider the following catenary model (the $n=3$ case from Figure~\ref{fig:cat}):
\begin{center}
	\begin{tikzpicture}[scale=1.8]
 	\draw (0,0) circle (0.2);	
 	\draw (1,0) circle (0.2);	
 	\draw (2,0) circle (0.2);	
    	\node[] at (0, 0) {1};
    	\node[] at (1, 0) {$2$};
    	\node[] at (2, 0) {$3$};
	 \draw[->] (0.3, 0.05) -- (0.7, 0.05);	
	 \draw[<-] (0.3, -0.05) -- (0.7, -0.05);	
	 \draw[->] (1.3, 0.05) -- (1.7, 0.05);	
	 \draw[<-] (1.3, -0.05) -- (1.7, -0.05);	
   	 \node[] at (0.5, 0.2) {$a_{21}$};
   	 \node[] at (1.5, 0.2) {$a_{32}$};
   	 \node[] at (0.5, -0.2) {$a_{12}$};
   	 \node[] at (1.5, -0.2) {$a_{23}$};
 	\draw (-0.33,-.49) circle (0.05);	
	 \draw[-] (0, -.2 ) -- (-0.3, -.45);	
	 \draw[->] (-0.7, 0) -- (-0.3, 0);	
   	 \node[] at (-.9, 0) {in};
	 \draw[->] (0, .25) -- (-0.3, .5);	
   	 \node[] at (0, 0.45) {$a_{01}$};
	\end{tikzpicture}
\end{center}
By Proposition~\ref{prop:MSE}, an input-output equation is:
\[
\det 
	\begin{pmatrix}
	d/dt + a_{01} + a_{21} & - a_{12} & 0 \\
	-a_{21} & d/dt +a_{12}+ a_{32} & -a_{23}\\
	0 & -a_{32} & d/dt + a_{23} \\
	\end{pmatrix} y_1
~=~
\det 
	\begin{pmatrix}
	 d/dt +a_{12}+ a_{32} & -a_{23}\\
	 -a_{32} & d/dt + a_{23} \\
	\end{pmatrix} u_1	~,
\]
which, when expanded, becomes:
{\footnotesize
\begin{align} \notag
	& y_1^{(3)}
	+
	\left( a_{01} + a_{12} + a_{21} + a_{23}+ a_{32}
		\right) y_1^{(2)}
	+
	\left( a_{01}a_{12} +  a_{01}a_{23} + a_{01}a_{32} + a_{12}a_{23} + a_{21}a_{23} + a_{21}a_{32} 
		\right) y_1'
	+
		\left( a_{01} a_{12} a_{23}
		\right) y_1 \\
& \quad \quad \quad ~=~
	 u_1^{(2)}
	+
	\left( a_{12}+a_{23}+a_{32}
		\right) u_1'
	+
	\left( a_{12} a_{23}
		\right) u_1~.
		\label{eq:i-o-equation-for-example}
\end{align}
}
Observe, from the left-hand side of equation~\eqref{eq:i-o-equation-for-example} 
 that the coefficient of $y_1^{(i)}$ corresponds to the set of forests (acyclic subgraphs) of the model that have $(3-i)$ edges and at most 1 outgoing edge per compartment.
As for the right-hand side, the coefficient of $u_1^{(i)}$ corresponds to similar $(n-i-1)$-edge forests in the following model:
\begin{center}
	\begin{tikzpicture}[scale=1.8]
 	\draw (0,0) circle (0.2);	
 	\draw (1,0) circle (0.2);	
    	\node[] at (0, 0) {2};
    	\node[] at (1, 0) {$3$};
	 \draw[->] (0.3, 0.05) -- (0.7, 0.05);	
	 \draw[<-] (0.3, -0.05) -- (0.7, -0.05);	
   	 \node[] at (0.5, 0.2) {$a_{32}$};
   	 \node[] at (0.5, -0.2) {$a_{23}$};
 	\draw (-0.33,-.49) circle (0.05);	
	 \draw[-] (0, -.2 ) -- (-0.3, -.45);	
	 \draw[->] (-0.7, 0) -- (-0.3, 0);	
   	 \node[] at (-.9, 0) {in};
	 \draw[->] (0, .25) -- (-0.3, .5);	
   	 \node[] at (0, 0.45) {$a_{12}$};
	\end{tikzpicture}
\end{center}
This combinatorial interpretation of the coefficients of the input-output equation generalizes, as we will see in Theorem~\ref{thm:coeff-i-o-general}.
\end{example}

\subsection{Identifiability}
A linear compartmental model is 
\textit{generically structurally identifiable} if from a generic choice of the inputs
and initial conditions, the parameters of the model can be recovered 
from exact measurements of both the inputs and the outputs.  Recent work from Ovchinnikov, Pogudin, and Thompson showed that checking identifiability from the input-output equation in~\eqref{eq:i-o-general} is valid for strongly connected models with one input and one output \cite[Corollary 2]{Ovchinnikov-Pogudin-Thompson}.
We now define this concept precisely. 

\begin{defn}\label{defn:identify}
Let $\mathcal{M}=(G, In, Out, Leak)$ be a strongly connected linear compartmental model with 
one input and one output. 
The  \textit{coefficient map}
is the function 
$c:  \R^{|E| + |Leak|}  \rightarrow \R^{k}$
that is the vector of all coefficient functions of
the input-output equation in~\eqref{eq:i-o-general}
(here $k$ is the total number of coefficients).
Then
$\mathcal{M}$ is:
\begin{enumerate}
	\item \textit{globally identifiable} if $c$ is one-to-one, and is \textit{generically globally identifiable} if $c$ is one-to-one outside a set of measure zero.
	\item {\textit{locally identifiable} if
		around every point in $\R^{|E| + |Leak|}$ there is an open neighborhood $U$ such that 
		 $c : U  \rightarrow \R^k$  is one-to-one, and is \textit{generically locally identifiable} if, 
		 outside a set of measure zero, every point in $\R^{|E| + |Leak|}$ has such an open 
		 neighborhood $U$.}
\item{\textit{unidentifiable} if $c$ is infinite-to-one.}
\end{enumerate}
\end{defn}

Since the coefficients in $c$ are all polynomial functions of the
parameters, the model $\mathcal{M}=(G, In, Out, Leak)$ is generically locally
identifiable if and only if the image of $c$ has dimension equal to
the number of parameters, i.e.,~$|E| + |Leak|$.  The dimension of the image of
a map is equal to the rank of the Jacobian matrix at a generic point.  
Thus we have the following result, which is \cite[Proposition 2]{MeshkatSullivantEisenberg}:

\begin{prop} [Meshkat, Sullivant, and Eisenberg] \label{prop:jacobian}
A linear compartmental model $(G, In,$ $Out, Leak)$ 
is generically locally
identifiable if and only if 
the rank of
the Jacobian matrix of its coefficient map $c$, 
when evaluated at a generic point, 
is equal to $|E| + |Leak|$. 
\end{prop}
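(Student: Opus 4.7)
The plan is to decompose the equivalence into two standard pieces: first relating local injectivity of $c$ to the rank of its Jacobian via the constant-rank theorem, and then using the fact that for a polynomial map the rank-dropping locus is Zariski closed and hence of measure zero. Set $m := |E| + |Leak|$, so $c : \R^m \to \R^k$ is a polynomial map whose Jacobian $Dc$ is a $k \times m$ matrix of polynomials in the parameters.

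For the direction that generic local identifiability implies $\mathrm{rank}(Dc) = m$ at a generic point, I would argue contrapositively. If the generic rank of $Dc$ were some $r < m$, then the locus where the rank equals its maximum value $r$ is a Zariski-open subset $W \subseteq \R^m$. By the constant-rank theorem, each point of $W$ has a neighborhood on which $c$ factors up to diffeomorphism through a projection $\R^m \to \R^r$, and hence has fibers of positive dimension $m - r \geq 1$. In particular, no open neighborhood of a point of $W$ can be one-to-one, which contradicts generic local identifiability.

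For the converse, suppose the Jacobian of $c$ achieves rank $m$ at some point. Since the entries of $Dc$ are polynomials, the set $Z \subseteq \R^m$ on which $\mathrm{rank}(Dc) < m$ is the common vanishing locus of all $m \times m$ minors of $Dc$, which is a proper Zariski-closed subset and therefore of measure zero. On $\R^m \setminus Z$ the map $c$ is an immersion, so by the inverse function theorem every $\theta^\ast \notin Z$ has an open neighborhood $U$ on which $c|_U$ is one-to-one. This matches the definition of generic local identifiability in Definition~\ref{defn:identify}.

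The only real subtlety is aligning the measure-zero exceptional set appearing in Definition~\ref{defn:identify} with the algebraic exceptional set produced by the Jacobian criterion; this is where polynomiality is essential, since for a polynomial map the image $\overline{c(\R^m)}$ is a constructible set whose dimension equals the generic rank of $Dc$, and the rank-dropping locus is automatically algebraic and hence Lebesgue-negligible. With these standard facts in hand, both implications reduce to one application of the constant-rank theorem.
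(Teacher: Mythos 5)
Your proof is correct, but it is worth noting that the paper does not really prove this statement at all: it is quoted as \cite[Proposition 2]{MeshkatSullivantEisenberg}, and the only justification offered is a two-sentence reduction --- generic local identifiability holds if and only if the image of $c$ has dimension $|E|+|Leak|$, and the dimension of the image of a polynomial map equals the generic rank of its Jacobian. Your argument takes a genuinely different decomposition: you never invoke the dimension of the image, and instead control local injectivity directly. In the forward direction you use the constant-rank theorem on the Zariski-open locus of maximal rank to exhibit positive-dimensional fibers when the generic rank $r$ is less than $m=|E|+|Leak|$, which rules out injectivity on any neighborhood of a generic point; in the converse you use that the rank-drop locus is the common zero set of the $m\times m$ minors, hence a proper algebraic (so Lebesgue-null) set, and that an immersion is locally injective. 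What your route buys is a self-contained proof that makes explicit \emph{why} rank deficiency destroys local identifiability (the fibers become submanifolds of dimension $m-r\geq 1$), and it cleanly reconciles the measure-zero exceptional set in Definition~\ref{defn:identify} with the algebraic one coming from the minors. What the paper's route buys is brevity and a statement (image dimension equals generic rank) that is reused conceptually elsewhere; but as a proof it leaves both halves of that reduction unjustified, so your version is the more complete one. One cosmetic point: the proposition as stated omits the word ``rank'' (it should read ``the rank of the Jacobian matrix \dots is equal to $|E|+|Leak|$''), and your proof correctly supplies that reading.
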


\begin{example} For the model in Example \ref{ex:prop-MSE}, 
the input-output equation 
was shown in~\eqref{eq:i-o-equation-for-example}.  The coefficient map $c: \mathbb{R}^5 \to \mathbb{R}^5$ is therefore given by:
\begin{multline*}
(a_{01},a_{12},a_{21},a_{23},a_{32}) \mapsto  \\
(a_{01} + a_{12} + a_{21} + a_{23}+ a_{32}
	,~
	 a_{01}a_{12} +  a_{01}a_{23} + a_{01}a_{32} + a_{12}a_{23} + a_{21}a_{23} + a_{21}a_{32},~
	  a_{01} a_{12} a_{23}, ~a_{12}+a_{23}+a_{32},
	  ~
	   a_{12} a_{23})~.
\end{multline*}
The $5 \times 5$ Jacobian matrix of $c$ has determinant equal to $-a_{12}^2 a_{21} a_{23}$ and so, by Proposition~\ref{prop:jacobian}, the model is generically locally identifiable.  
This identifiability result is well known for general catenary models (see Proposition \ref{prop:prior-ident-degree} below), and a general formula for the determinant of the Jacobian matrix (for catenary models) is conjectured in a later section (Conjecture~\ref{conj:cat}).
\end{example}

\begin{rmk} 
An alternative to Proposition~\ref{prop:jacobian} is to test identifiability by using a Gr\"obner basis to solve the system of equations $c(p)=c(p^*)$, where $p^*$ is an arbitrary point in the parameter space, for $p$.  The model is globally identifiable if there is a unique solution $p=p^*$, locally identifiable if there are a finite number of solutions, and unidentifiable if there are an infinite number of solutions.  In practice, Gr\"obner basis computations are more computationally expensive than Jacobian calculations (as in Proposition~\ref{prop:jacobian}).
\end{rmk}


We now examine 
when the Jacobian is {\em generically} full rank, but certain parameter choices lead to rank-deficiency.  We call parameter values that lead to this rank-deficiency {\em non-identifiable}.  Note that the parameters of these models are {\em generically} identifiable, and in the identifiability literature are called ``identifiable'' \cite{distefano-book}, but for our purposes, we are examining the non-generic case and thus denote the {\em values} of these parameters ``non-identifiable''.    

\begin{defn} \label{def:sing-locus}
Let $\mathcal{M}=(G, In, Out, Leak)$ be a strongly connected linear compartmental model, with one input and one output, that is generically locally identifiable.
Let $c$ denote the coefficient map.
The {\em locus of non-identifiable parameter values}, or, for short, the \textit{singular locus} 
is the subset of the parameter space $\R^{|E| + |Leak|}$ where the Jacobian matrix of $c$ has rank strictly less than $|E| + |Leak|$. 
\end{defn}

Thus, the singular locus is the defined by the set of all $(|E| + |Leak|)  \times (|E| + |Leak|)$ minors of $\jac(c)$.  
We will focus on the cases when only a single such minor, which we give a name to below, defines the singular locus:
\begin{defn} \label{def:eq-sing-locus}
Let $\mathcal{M}=(G, In, Out, Leak)$ be a linear compartmental model,
with coefficient map $c:  \R^{|E| + |Leak|}  \rightarrow \R^{k}$.
Suppose $\mathcal{M}$ is generically locally identifiable (so, $|E| + |Leak| \leq k$).
\begin{enumerate}
	\item If $|E| + |Leak| = k$ (the number of parameters equals the number of coefficients), then $\det (\jac(c))$ is the {\em equation of the singular locus}.
	\item Assume $|E| + |Leak| < k$. Suppose
	  there is a choice of $|E| + |Leak|$ coefficients from~$c$, with 
	  $r:\R^{|E| + |Leak|}  \rightarrow \R^{|E| + |Leak|}$ the resulting restricted coefficient map, 
	  such that $\det (\jac(r))=0$ if and only if the Jacobian of $c$ has rank strictly less than $|E| + |Leak|$.
	Then $\det (\jac(r))$ is 
	the {\em equation of the singular locus}.
\end{enumerate}
\end{defn}

\begin{rmk} The equation of the singular locus,
when  $|E| + |Leak| = k$, is defined only up to sign, as we do not specify 
the order of the coefficients in $c$.
When $|E| + |Leak| < k$, 
there need not be a single  $(|E| + |Leak|)  \times (|E| + |Leak|)$ minor that defines the singular locus,
and thus a singular-locus equation as defined above might not exist.  
We, however, have not encountered such a model, although we suspect one exists.
Accordingly, we ask, {\em is there always a choice of coefficients or, equivalently, rows of $\jac(c)$, such that this square submatrix is rank-deficient if and only if the original matrix $\jac(c)$ is?}  And, when such a choice exists, {\em is this choice of coefficients unique?} 
%
\end{rmk}

\begin{rmk}
In applications, we typically are only interested in the factors of the singular-locus equation: we only care whether, e.g., $a_{12}$ divides the equation (i.e., whether $a_{12}$ is non-identifiable) and not which higher powers $a_{12}^m$, for positive integers $m$, also divide it.
\end{rmk}

One aim of our work is to investigate the equation of the singular locus for mammillary, catenary, and cycle models with a single input, output, and leak in the first compartment.  As a start, all of these families of models are (at least) generically locally identifiable:

\begin{prop}  \label{prop:gli}
The $n$-compartment catenary, cycle, and mammillary models in Figures~\ref{fig:cat} and~\ref{fig:cycle} (with input, output, and leak in compartment 1 only) are generically locally identifiable.
\end{prop}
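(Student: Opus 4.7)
The plan is to invoke Proposition~\ref{prop:jacobian}, which reduces generic local identifiability to showing the Jacobian of the coefficient map $c$ has full rank $|E|+|Leak|$ at a generic point. I would first tally dimensions. By Proposition~\ref{prop:MSE}, the input-output equation is $\det(\partial I-A)y_1=\det((\partial I-A)_{11})u_1$, in which the left-hand polynomial is monic of degree $n$ and the right-hand polynomial is monic of degree $n-1$, giving $k=n+(n-1)=2n-1$ coefficients in $c$. The catenary and mammillary models each have $|E|+|Leak|=2n-1$, matching $k$ exactly, so $\jac(c)$ is square and it suffices to exhibit nonvanishing of its determinant. The cycle has $|E|+|Leak|=n+1<2n-1$ for $n\geq 3$, so one must select a subset of $n+1$ coefficients whose associated Jacobian minor is generically nonzero.

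The cleanest route is to cite the prior literature: generic local identifiability for each of these three families with a single input, output, and leak in compartment~$1$ is established in \cite{cobelli-constrained-1979,MeshkatSullivant,MeshkatSullivantEisenberg}, and the proposition follows immediately from those results combined with Proposition~\ref{prop:jacobian}. This is the approach I would use at this point in the exposition, since the proposition is used only to license the later discussion of the singular locus.

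A self-contained alternative, for each model, is to evaluate $\jac(c)$ symbolically and identify a maximal minor that is a nonzero polynomial in the $a_{ij}$. For the catenary and mammillary models, the explicit singular-locus polynomials recorded in Table~\ref{tab:summary} (and proved in Section~\ref{sec:singular}) are precisely $\det(\jac(c))$ up to sign; since each is a visibly nonzero polynomial in the parameters, the Jacobian has full rank on a dense open set. For the cycle, one picks the square submatrix of $\jac(c)$ whose determinant matches the singular-locus polynomial in Table~\ref{tab:summary}; again this polynomial is visibly nonzero, so generic full rank follows.

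The main obstacle, if one wants a self-contained argument rather than a citation, is the cycle case: because $|E|+|Leak|<k$, one cannot simply compute a single determinant of a square Jacobian. Instead one must choose which $n+1$ of the $2n-1$ coefficient functions to differentiate, and justify that some such choice is generically nonsingular. At this point in the paper the combinatorial coefficient formula of Theorem~\ref{thm:coeff-i-o-general} is not yet in hand, so a cycle-specific ad hoc computation (or the cited result) is the natural option; the later singular-locus calculations for the cycle, proved via Theorem~\ref{thm:coeff-i-o-general}, then give a fully self-contained alternate proof a posteriori.
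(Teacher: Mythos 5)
Your recommended route---deferring to the prior identifiability results for these three families---is essentially the paper's own proof, which assembles the statement from \cite[Theorem 5.13]{MeshkatSullivant} (catenary and mammillary, via inductive strong connectedness) and \cite[Proposition 5.4]{MeshkatSullivant} (cycle) to get maximal-dimensional coefficient maps for the all-leaks versions, and then \cite[Theorem 1]{MeshkatSullivantEisenberg} to pass to the single-leak models. Your dimension counts, your identification of the non-square Jacobian as the wrinkle in the cycle case, and your a posteriori self-contained alternative via the computations of Section~\ref{sec:singular} are all correct.
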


\begin{proof}  
Catenary and mammillary models are inductively strongly connected with $2n-2$ edges.  Thus,  catenary and mammillary models with a single input and output in the first compartment and leaks from every compartment 
have coefficient maps with images of 
maximal dimension~\cite[Theorem 5.13]{MeshkatSullivant}.  Removing all the leaks except one from the first compartment, we can apply \cite[Theorem 1]{MeshkatSullivantEisenberg} and obtain generic local identifiability.  

Similarly, \cite[Proposition 5.4]{MeshkatSullivant} implies that the image of the coefficient map for cycle models with leaks from every compartment has maximal dimension. Thus removing all leaks except one results in a generically locally identifiable model, again by applying \cite[Theorem 1]{MeshkatSullivantEisenberg}.   
\end{proof}

In fact, catenary models are generically {\em globally} identifiable (Proposition~\ref{prop:prior-ident-degree}).   We also will investigate the identifiability degrees of the other two models, and in particular, prove that the cycle model has 
identifiability degree $(n-1)!$ (Theorem~\ref{thm:ident-degree-cycle}).

\section{The singular locus and identifiable submodels} \label{sec:submodels}
One reason a model's singular locus is of interest is because it gives us information regarding the identifiability of particular parameter values.  Indeed, for generically locally identifiable models, 
the singular locus contains the set of parameter values that cannot be recovered, even locally. 
A second reason for studying the singular locus, which is the main focus of this section, is that the singular-locus equation gives information about which submodels are identifiable.

\begin{theorem}[Identifiable submodels] \label{thm:delete}
Let $\mathcal{M}=(G, In, Out, Leak)$ 
be a linear compartmental model that 
is strongly connected and generically locally identifiable, 
has an input and output in compartment 1 (and no other inputs or outputs), 
and has singular-locus equation $f$.
Let $\widetilde {\mathcal{M}}$ be the model obtained from $\mathcal{M}$ by deleting a set of edges $\mathcal{I}$ of $G$. 
If $\widetilde {\mathcal{M}}$ is strongly connected, and 
$f$ is {\em not} in the ideal $\langle a_{ji} \mid (i,j) \in \mathcal{I} \rangle$ (or, equivalently, after evaluating $f$ at $a_{ji}=0$ for all $(i,j) \in \mathcal{I}$, the resulting polynomial is nonzero),
 then 
$\widetilde { \mathcal{M}}$ is generically locally identifiable. 
\end{theorem}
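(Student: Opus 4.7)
The plan is to exploit the fact that deleting an edge $i \to j$ from $G$ amounts, at the level of the compartmental matrix, to setting the parameter $a_{ji}$ equal to $0$. First I would set up this dictionary carefully: the compartmental matrix $\widetilde A$ of $\widetilde{\mathcal{M}}$ equals the compartmental matrix $A$ of $\mathcal{M}$ specialized at $a_{ji}=0$ for all $(i,j) \in \mathcal{I}$ (both the off-diagonal entries and the diagonal sums are obtained by this specialization). Consequently, by Proposition~\ref{prop:MSE} applied to both strongly connected models, the input-output equation of $\widetilde{\mathcal{M}}$ is obtained from that of $\mathcal{M}$ by the same specialization, so the coefficient map $\tilde c$ of $\widetilde{\mathcal{M}}$ (as a function of the surviving parameters) is the restriction of $c$ to the coordinate subspace $\{a_{ji}=0 : (i,j)\in\mathcal{I}\}$.

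Next I would relate the two Jacobians. Let $d$ be the restricted coefficient map from Definition~\ref{def:eq-sing-locus} whose $(|E|+|Leak|)\times(|E|+|Leak|)$ Jacobian has determinant $f$, and let $\tilde d$ denote the restriction of $\tilde c$ to the same choice of coefficients. Write the parameter vector of $\mathcal{M}$ as $(a_{\mathcal{I}}, a_{\mathcal{I}^c})$ where $a_{\mathcal{I}}$ collects the parameters to be deleted and $a_{\mathcal{I}^c}$ the rest. Then for any point $\tilde p \in \R^{|\widetilde E|+|Leak|}$ and the corresponding point $p^0 = (0, \tilde p)$ in $\R^{|E|+|Leak|}$,
\[
\jac(\tilde d)(\tilde p) \;=\; \bigl[\,\jac(d)(p^0)\,\bigr]_{\text{columns indexed by } a_{\mathcal{I}^c}},
\]
i.e.\ the Jacobian of $\tilde d$ is literally the submatrix of columns of $\jac(d)$ at $p^0$ corresponding to the surviving parameters.

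Now the hypothesis that $f$ is not in the ideal $\langle a_{ji} : (i,j) \in \mathcal{I}\rangle$ says exactly that $f(p^0)$, viewed as a polynomial in $a_{\mathcal{I}^c}$ only, is not identically zero. Hence at a generic $\tilde p$, $\det \jac(d)(p^0) = f(p^0) \neq 0$, so the square matrix $\jac(d)(p^0)$ has full rank $|E|+|Leak|$. In particular any subset of its columns is linearly independent, so the submatrix displayed above — which equals $\jac(\tilde d)(\tilde p)$ — has rank $|\widetilde E|+|Leak|$. Since $\jac(\tilde d)$ is itself a submatrix (of rows) of $\jac(\tilde c)$, this forces $\jac(\tilde c)(\tilde p)$ to have rank $|\widetilde E|+|Leak|$ generically, and Proposition~\ref{prop:jacobian} then gives that $\widetilde{\mathcal{M}}$ is generically locally identifiable.

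The only genuinely delicate step is the identification of $\tilde c$ with the specialization of $c$. A naive concern is that setting some $a_{ji}=0$ might produce cancellations that collapse the degree of the input-output equation for $\widetilde{\mathcal{M}}$, or produce a different set of coefficient functions, but Proposition~\ref{prop:MSE} (which applies because we assumed $\widetilde{\mathcal{M}}$ is still strongly connected and retains the leak) guarantees both sides of the input-output equation have the same form as before, with coefficients obtained by specialization. Once that identification is in place, the rest is the elementary linear-algebra fact that full-rank square matrices remain full column rank on any subset of columns; I expect no further obstacle.
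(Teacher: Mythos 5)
Your proposal is correct and follows essentially the same route as the paper: specialize the singular-locus determinant at $a_{ji}=0$ to get a nonzero polynomial, observe that the column-submatrix of the specialized Jacobian indexed by the surviving parameters therefore has full column rank generically, and invoke Proposition~\ref{prop:MSE} (using strong connectedness of $\widetilde{\mathcal{M}}$) to identify the specialized coefficients with coefficients of $\widetilde{\mathcal{M}}$'s input-output equation. The only cosmetic difference is that the paper additionally selects $\widetilde m$ generically independent rows to exhibit an explicit square restricted coefficient map, whereas you pass directly from full column rank of the rectangular submatrix to the rank condition of Proposition~\ref{prop:jacobian}; both are valid.
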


\begin{proof} 
Let $\mathcal{M}=(G, In, Out, Leak)$,
the submodel $\widetilde {\mathcal{M}}$, 
the polynomial
	$f \in \mathbb{Q}[a_{ij} \mid (i,j) \in E(G), ~{\rm or}~ i=0 ~{\rm and}~j \in Leak]$,
and the subset
 $\mathcal{I} \subseteq E(G)$ be as in the statement of the theorem. 
Thus, the following polynomial $\widetilde f$, obtained by evaluating $f$ at $a_{ji}=0$ for all deleted edges $(i,j)$ in  $\mathcal{I}$, is {\em not} the zero polynomial:
\begin{align*}
	\widetilde f ~:=~ f|_{a_{ji}=0 \text{ for } (i,j) \in \mathcal{I}} ~\in ~ \mathbb{Q}[a_{ji} \mid (i,j) \in E(G) \setminus \mathcal{I} , ~{\rm or}~ j=0 ~{\rm with}~i \in Leak ]~.
\end{align*}
In addition,
$f=\det \jac(r)$, where $r: \mathbb{R}^m \to \mathbb{R}^m$ is a choice of 
$m:=\lvert E(G) \rvert+ \lvert Leak  \rvert$ 
coefficients from $\mathcal{M}$'s input-output equation~\eqref{eq:i-o-general} in Proposition~\ref{prop:MSE}.  
  
Let $\widetilde m = m - \lvert \mathcal{I} \rvert$.  
%
Let $J:= \jac(r)|_{a_{ji}=0 \text{ for } (i,j) \in \mathcal{I}} $ denote the 
matrix obtained from $\jac (r)$ by setting $a_{ji}=0$ for all $(i,j) \in \mathcal{I}$.  
The determinant of $J$ is the nonzero polynomial $\widetilde f$, 
so $J$ is full rank when evaluated at any parameter vector $(a_{ji})$ 
outside the measure-zero set $V(\widetilde f) \subseteq \mathbb{R}^{\widetilde m}$.  
(Here, $V(f)$ denotes the real vanishing set of $f$.)
Thus, the $m \times \widetilde m$ matrix $B$ obtained from $J$ by deleting the set of columns corresponding to $\mathcal{I}$,
 is also full rank ($\mathop{rank}(B)=\widetilde m$) outside of $V(\widetilde f) \subseteq \mathbb{R}^{\widetilde m}$.

Choose $\widetilde m$ rows of $B$ that are linearly independent outside some measure-zero set in $\mathbb{R}^{\widetilde m}$.  (Such a choice exists, because, otherwise, $B$ would be rank-deficient on all of $\mathbb{R}^{\widetilde m}$ and thus so would the generically full-rank matrix $J$, which is a contradiction.)  These rows form an ${\widetilde m} \times {\widetilde m}$ matrix that we call $\widetilde J$.

Let $\widetilde r: \mathbb{R}^{\widetilde m} \to \mathbb{R}^{\widetilde m}$ be obtained from 
$r$ by restricting to the coordinates $r_i$ corresponding to the above choice of rows of $B$, and also setting $a_{ji}=0$ for all $(i,j) \in \mathcal{I}$.
By construction and by Proposition~\ref{prop:MSE} 
(here we use that $In=Out=\{1\}$), $\widetilde{r}$ is 
a choice of $\widetilde m$ coefficients from the input-output equations of $\widetilde{\mathcal{M}}$, and, by construction, the Jacobian matrix of $\widetilde{r}$ is $\widetilde J$ (whose rows we chose to be generically full rank).  Hence, $\widetilde{\mathcal{M}}$ is generically locally identifiable.
\end{proof}

\begin{eg} \label{ex:delete}
Consider the following (strongly connected) linear compartmental model $\mathcal{M}$: 
\begin{center}
	\begin{tikzpicture}[scale=1.8]
 	\draw (-1,0) circle (0.2);	
 	\draw (0,0) circle (0.2);	
 	\draw (0,-1) circle (0.2);	
 	\draw (-1,-1) circle (0.2);	
	 \draw[->] (-0.7, 0.05) -- (-0.3, 0.05);	
	 \draw[<-] (-0.7, -0.05) -- (-0.3, -0.05);	
	 \draw[<-] (-0.7, -1) -- (-0.3, -1);	
	 \draw[->] (-1, -0.7) -- (-1,-0.3);	
	 \draw[<-] (0.05, -0.7) -- (0.05,-0.3);	
	 \draw[->] (-0.05, -0.7) -- (-0.05,-0.3);	
   	 \node[] at (-0.5, 0.2) {$a_{21}$};
   	 \node[] at (-0.5, -0.2) {$a_{12}$};
   	 \node[] at (-0.5, -1.15) {$a_{43}$};
   	 \node[] at (0.25, -0.5) {$a_{32}$};
   	 \node[] at (-0.25, -0.5) {$a_{23}$};
   	 \node[] at (-0.83, -0.5) {$a_{14}$};
    	\node[] at (-1, 0) {1};
    	\node[] at (0, 0) {2};
    	\node[] at (0,-1) {3};
    	\node[] at (-1, -1) {4};
 	\draw (-1.33,-.49) circle (0.05);	
	 \draw[-] (-1, -.2 ) -- (-1.3, -.45);	
	 \draw[->] (-1.7, 0) -- (-1.3, 0);	
   	 \node[] at (-1.9, 0) {in};
	 \draw[->] (-1.1, .25) -- (-1.4, .5);	
   	 \node[] at (-1.1, 0.45) {$a_{01}$};
	 \end{tikzpicture}
\end{center}

\noindent
This model is generically locally identifiable, and the equation of the singular locus is:
{\footnotesize
\[ a_{12} a_{14} a_{21}^2 a_{32} (a_{12} a_{14} - a_{14}^2 - a_{12} a_{23} + a_{14} a_{23} + a_{14} a_{32} - 
   a_{12} a_{43} + a_{14} a_{43} - a_{32} a_{43}) (a_{12} a_{23} + a_{12} a_{43} + a_{32} a_{43})~.
\] }This equation is {\em not} divisible by $a_{23}$, and the model  $\widetilde{\mathcal{M}}$ obtained by removing that edge (labeled by $a_{23}$) is strongly connected.  So, by Theorem~\ref{thm:delete}, 
 $\widetilde{\mathcal{M}}$
is generically locally identifiable.
\end{eg}

The converse of Theorem~\ref{thm:delete} does not hold, as we see in the following example.

\begin{eg}[Counterexample to converse of Theorem~\ref{thm:delete}] \label{ex:converse}
Consider again the model $\mathcal{M}$ from Example~\ref{ex:delete}.  
The submodel obtained by deleting the edges labeled by $ a_{12}$ and $a_{23}$ is generically locally identifiable (by Theorem~\ref{thm:cycle} below: the submodel is the 4-compartment cycle model).
Nevertheless, the singular-locus equation of $\mathcal{M}$ is divisible by $a_{12}$ and thus the equation is in the ideal $\langle a_{12}, a_{23} \rangle$. 
\end{eg}

Example~\ref{ex:converse},
our counterexample to the converse of Theorem~\ref{thm:delete}, involved deleting two edges ($ \lvert \mathcal{I} \rvert =2$).  We do not know of a counterexample that deletes only one edge, and we end this section with the following question.

\begin{question} \label{q:converse}
In the setting of Theorem~\ref{thm:delete}, if a parameter $a_{ij}$ divides $f$, does it follow that the model $\mathcal{M}'$ obtained by deleting the edge labeled by $a_{ij}$ is unidentifiable  (assuming that $\mathcal{M}'$ is strongly connected)?
\end{question}

\begin{remark} \label{rmk:CJSSS}
Question~\ref{q:converse} was pursued recently by Chan {\em et al.}, who conjectured an affirmative answer~\cite{CJSSS}.
\end{remark}

\section{The coefficient map and its Jacobian matrix} \label{sec:jac}

Recall that for linear compartmental models with one input and one output, an input-output equation was given in equation~\eqref{eq:i-o-general} 
 (in Proposition~\ref{prop:MSE}).  In this section, we give a new combinatorial formula for the coefficients of this equation (Theorem~\ref{thm:coeff-i-o-general}).

\subsection{Preliminaries} \label{sec:gphs}
To state Theorem~\ref{thm:coeff-i-o-general}, we must define some graphs associated to a model. In what follows, we use ``graph'' to mean ``directed graph''. 
\begin{definition} \label{def:gphs}
Consider a linear compartmental model $\mathcal{M} = (G, In, Out, Leak)$ with $n$~compartments.
	\begin{enumerate}
	\item The {\em leak-augmented graph} of $\mathcal{M}$, denoted by $\widetilde G$, is obtained from $G$ by adding a new node, labeled by 0, and adding edges $j \to 0$ labeled by $a_{0j}$, for every leak $j \in Leak$.
	\item The graph $ \widetilde G_i$, for some $i=1,\dots, n$, is obtained from $\widetilde G$ by completing these steps:
		\begin{itemize}
		\item Delete compartment $i$, by taking the induced subgraph of $\widetilde G$ with vertices \\ $\{0,1,\dots, n\} \setminus \{i\}$, and then:
		\item For each edge $j \to i$ (with label $a_{ij}$) in $\widetilde G$, 
		if $j \in Leak$ (i.e, $j \to 0$ with label $a_{0j}$ is an edge in $\widetilde G$), then label the leak $j \to 0$ in $\widetilde G_i$ by $(a_{0j} + a_{ij})$; if, on the other hand, $j \notin Leak$, then add to $ \widetilde G_i$ the edge $j \to 0$ with label $a_{ij}$.
		\end{itemize}
	\end{enumerate}
 \end{definition}

\begin{example} \label{ex:gphs}
Figure~\ref{fig:gphs} displays a model $\mathcal{M}$, its leak-augmented graph $\widetilde G$, and the graphs $\widetilde G_1$ and $\widetilde G_2$.  
The compartmental matrix of $\mathcal{M}$ is:
\[ A ~=~
	\begin{pmatrix}
	-a_{01}-a_{21} & a_{12}\\
	a_{21} & -a_{12} \\
	\end{pmatrix}~.
\]
The compartmental matrix that corresponds to $\widetilde G_1$ is obtained from $A$ by removing row 1 and column 1.  Similarly, for $\widetilde G_2$, the corresponding compartmental matrix comes from deleting row 2 and column 2 from $A$.
This observation generalizes (see Lemma~\ref{lem:remove-1}).

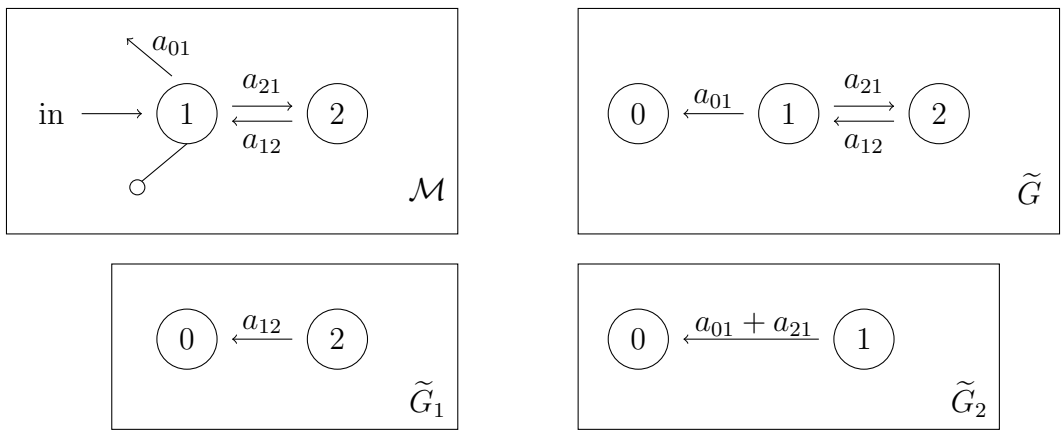
\begin{figure}[ht] 
\begin{center}
	\begin{tikzpicture}[scale=2]
 	\draw (-1,0) circle (0.2);	
 	\draw (0,0) circle (0.2);	
	 \draw[->] (-0.7, 0.05) -- (-0.3, 0.05);	
	 \draw[<-] (-0.7, -0.05) -- (-0.3, -0.05);	
   	 \node[] at (-0.5, 0.2) {$a_{21}$};
   	 \node[] at (-0.5, -0.2) {$a_{12}$};
    	\node[] at (-1, 0) {1};
    	\node[] at (0, 0) {2};
 	\draw (-1.33,-.49) circle (0.05);	
	 \draw[-] (-1, -.2 ) -- (-1.3, -.45);	
	 \draw[->] (-1.7, 0) -- (-1.3, 0);	
   	 \node[] at (-1.9, 0) {in};
	 \draw[->] (-1.1, .25) -- (-1.4, .5);	
   	 \node[] at (-1.1, 0.45) {$a_{01}$};
\draw (-2.2,-.8) rectangle (0.8, .7);
   	 \node[] at (0.6, -0.5) {$\mathcal{M}$};
 	\draw (2,0) circle (0.2);	
 	\draw (3,0) circle (0.2);	
 	\draw (4,0) circle (0.2);	
	 \draw[->] (3.3, 0.05) -- (3.7, 0.05);	
	 \draw[<-] (3.3, -0.05) -- (3.7, -0.05);	
	 \draw[<-] (2.3, 0) -- (2.7, 0);	
   	 \node[] at (3.5, 0.2) {$a_{21}$};
   	 \node[] at (3.5, -0.2) {$a_{12}$};
   	 \node[] at (2.5, 0.1) {$a_{01}$};
	\node[] at (2,0) {0};
    	\node[] at (3, 0) {1};
    	\node[] at (4, 0) {2};
\draw (1.6,-.8) rectangle (4.8, .7);
   	 \node[] at (4.6, -0.5) {$\widetilde G$};
 	\draw (-1,-1.5) circle (0.2);	
 	\draw (0,-1.5) circle (0.2);	
	 \draw[<-] (-0.7, -1.5) -- (-0.3, -1.5);	
   	 \node[] at (-0.5, -1.4) {$a_{12}$};
    	\node[] at (-1, -1.5) {0};
    	\node[] at (0, -1.5) {2};
\draw (-1.5,-2.1) rectangle (0.8, -1);
   	 \node[] at (0.6, -1.9) {$\widetilde G_1$};
	\draw (2,-1.5) circle (0.2);	
 	\draw (3.5,-1.5) circle (0.2);	
	 \draw[<-] (2.3, -1.5) -- (3.2, -1.5);	
   	 \node[] at (2.77, -1.4) { $a_{01}+a_{21}$};
	\node[] at (2,-1.5) {0};
    	\node[] at (3.5, -1.5) {1};
\draw (1.6,-2.1) rectangle (4.4, -1);
   	 \node[] at (4.2, -1.9) {$\widetilde G_2$};
	 \end{tikzpicture}
\end{center}
\caption{A model $\mathcal{M}$, its leak-augmented graph $\widetilde G$, and the graphs $\widetilde G_1$ and $\widetilde G_2$. } \label{fig:gphs}
\end{figure}
\end{example}

\begin{lemma} \label{lem:remove-1}
Consider a linear compartmental model $\mathcal{M}$ with compartmental matrix $A$ and $n$ compartments.  Let 
 $ \widetilde G_i$, for some $i=1,\dots, n$, be as in Definition~\ref{def:gphs}.  Then for any model $\mathcal{M}'$ whose leak-augmented graph is $\widetilde G_i$, the compartmental matrix of $\mathcal{M}'$ is  the matrix obtained from $A$ by removing row $i$ and column $i$.
\end{lemma}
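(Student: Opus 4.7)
The plan is to verify the identity entry by entry, directly from Definition~\ref{def:gphs} and the definition of the compartmental matrix. First note that the compartmental matrix of a model depends only on the pair $(G, Leak)$, and both are recoverable from a leak-augmented graph (the edges of $G$ are the edges of $\widetilde G_i$ not touching $0$, and $Leak$ is the set of nodes with an outgoing edge to $0$), so it suffices to check a single such $\mathcal{M}'$. Let $A'$ denote its compartmental matrix and let $\bar A$ be the submatrix of $A$ obtained by removing row $i$ and column $i$. I will show $A'_{jk} = \bar A_{jk}$ for every pair $j, k \in \{1,\dots,n\} \setminus \{i\}$.

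For an off-diagonal entry ($j \neq k$), Definition~\ref{def:gphs} leaves every edge $k \to j$ of $G$ with $j, k \neq i$ untouched: such edges are neither deleted nor relabeled when passing to $\widetilde G_i$. Hence $k \to j$ lies in the underlying graph of $\mathcal{M}'$ if and only if it lies in $G$, and in either case the label is $a_{jk}$, so $A'_{jk} = A_{jk}$. For a diagonal entry $A'_{jj}$, I would split into four cases according to whether $j \in Leak(\mathcal{M})$ and whether $j \to i$ is an edge of $G$. Using indicator notation $[\,\cdot\,]$, Definition~\ref{def:gphs} prescribes that the leak $j \to 0$ in $\widetilde G_i$ (when present at all) carries the label $a_{0j}[j \in Leak] + a_{ij}[j \to i \in E(G)]$, and that the outgoing edges from $j$ to non-zero vertices in $\widetilde G_i$ are precisely the edges $j \to k$ of $G$ with $k \neq i$. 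Plugging into the compartmental-matrix formula gives
\[
A'_{jj} ~=~ -a_{0j}[j \in Leak] - a_{ij}[j \to i \in E(G)] - \!\!\!\sum_{\substack{k \neq i \\ j \to k \in E(G)}} \!\! a_{kj},
\]
and folding the $a_{ij}$ term back into the sum yields exactly $A_{jj}$, finishing the diagonal case.

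No single step is hard; this is essentially a bookkeeping verification. The point that bears attention, and where I expect a careless write-up would slip, is the asymmetric way Definition~\ref{def:gphs} handles leak versus non-leak vertices: when $j \to i$ is an edge of $G$, either an existing leak label at $j$ is augmented by $a_{ij}$ or a brand new leak with label $a_{ij}$ is created. The uniform indicator formula above treats both situations in one line, which is exactly the graph-theoretic manifestation of the fact that removing the $i$-th row and column of $A$ preserves each remaining diagonal contribution.
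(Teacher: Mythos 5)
Your proof is correct and follows essentially the same route as the paper's: the off-diagonal entries agree because the induced subgraph on $\{1,\dots,n\}\setminus\{i\}$ preserves all edges and labels not touching $i$, and the diagonal entries agree because the edges into $i$ are reabsorbed as leak contributions. Your write-up is simply a more explicit, indicator-notation version of the paper's two-sentence verification.
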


\begin{proof}
Let $\mathcal{M}$, $A$, and $\mathcal{M}'$ be as in the statement of the lemma.  
Let $A_{ii}$ denote the matrix obtained from $A$ by removing row $i$ and column $i$. 
We must show that the compartmental matrix of $\mathcal{M}'$ equals $A_{ii}$.
The graph $ \widetilde G_i$ is obtained by taking the induced subgraph of $\widetilde{G}$ formed by all vertices except $i$
 -- which ensures that the off-diagonal entries of the compartmental matrix of $\mathcal{M}'$ equal those of $A_{ii}$ --
and then 
replacing edges directed toward $i$ with leak edges (and combining them as necessary with existing leak edges) 
-- which ensures that the diagonal entries of the compartmental matrix also equal those of $A_{ii}$.
Thus, $A_{ii}$ is
the compartmental matrix of $\mathcal{M}'$.
\end{proof}

The following terminology matches that of Buslov~\cite{buslov}:
\begin{definition} \label{def:forest}
Let $G$ be a (directed) graph.
\begin{enumerate} 
	\item A {\em spanning subgraph} of $G$ is a subgraph of $G$ with the same set of vertices as $G$.
	\item An {\em incoming forest} is a directed graph such that (a) the underlying undirected graph has no cycles and (b) each node has at most one outgoing edge.
	\item For an incoming forest $F$, let $\pi_F$ denote the product of the labels of all edges in the forest, that is, $\pi_F = \prod_{(i,j) \in E(F)} a_{ji}$, where $a_{ji}$ labels the edge $i \to j$.
	\item  Let $ \mathcal{F}_k( G) $ denote the set of all $k$-edge, spanning, incoming forests of $G$.
\end{enumerate}
\end{definition}

\subsection{A formula for the coefficient map} \label{sec:coef}
Our formula for the coefficient map expresses each coefficient as a sum, over certain spanning forests, of the product of the edge labels in the forest (Theorem~\ref{thm:coeff-i-o-general}).  
The formula is an ``expanded out'' version of a result of Meshkat and Sullivant
 \cite[Theorem 3.2]{MeshkatSullivant} 
 that showed the coefficient map factors through the cycles in the leak-augmented graph.
The difference is due to the fact that Meshkat and Sullivant treated diagonal entries of $A$ as separate variables (e.g., $a_{ii}$), while our diagonal entries are negative sums of a leak and/or rates (e.g., $-a_{0i}-a_{2i}-a_{3i}$).

\begin{theorem}[Coefficients of input-output equations] \label{thm:coeff-i-o-general}
 \label{thm:coeff-i-o-general}
Consider a linear compartmental model $\mathcal{M} = (G, In, Out, Leak)$
 that
	has an input and output in compartment 1 (and no other inputs or outputs).
Let $n$ denote the number of compartments, and $A$ the compartmental matrix.  Write the input-output equation~\eqref{eq:i-o-general} as:
	\begin{align} \label{eq:coeff-i-o-general}
	 y_1^{(n)} + c_{n-1}  y_1^{(n-1)} + \dots  + c_1 y_1' + c_0y_1 ~=~  u_1^{(n-1)} + d_{n-2} u_1^{(n-2)} + \dots  + d_1 u_1' + d_0 u_1~.
	\end{align}
	Then the coefficients of this input-output equation are as follows:
\begin{align*}
	c_i ~&=~
			 \sum_{F \in \mathcal{F}_{n-i}( \widetilde G)} \pi_F   \quad \quad \quad \text{for } i=0,1,\dots,n-1~, \quad  \text{and} \\
	d_i ~&=~ 
			 \sum_{F \in \mathcal{F}_{n-i-1}( \widetilde G_1)} \pi_F    \quad \quad\quad \text{for } i=0,1,\dots,n-2~.
\end{align*}
%
%
%
%
\end{theorem}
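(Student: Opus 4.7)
The plan is to reduce the statement to the multi-root Matrix-Tree Theorem for weighted directed graphs. By Proposition~\ref{prop:MSE}, $c_i$ and $d_i$ are the $\partial^i$-coefficients of $\det(\partial I - A)$ and $\det((\partial I - A)_{11})$, respectively, so it suffices to interpret the $\partial^i$-coefficient of each determinant as a sum over spanning incoming forests of the appropriate graph.

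First, I would expand $\det(\partial I - A)$ as a polynomial in $\partial$ by multilinearity on the diagonal entries; a routine calculation gives that the $\partial^i$-coefficient equals $(-1)^{n-i} \sum_{|S|=n-i} \det(A[S,S])$, where $A[S,S]$ denotes the principal submatrix of $A$ on rows and columns indexed by $S \subseteq \{1,\ldots,n\}$. Setting $L := -A$, I would check directly from the definitions that $L$ is precisely the principal submatrix on $\{1,\ldots,n\}$ of the weighted Laplacian $\widetilde L$ of the leak-augmented graph $\widetilde G$, using the convention $\widetilde L_{ii} = $ sum of weights of edges out of $i$ in $\widetilde G$ and $\widetilde L_{ij} = -a_{ij}$ for $i \neq j$. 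The sign factors $(-1)^{n-i}$ and $(-1)^{|S|}$ then cancel to give $c_i = \sum_{|S|=n-i} \det L[S,S]$.

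Second, I would invoke the multi-root Matrix-Tree Theorem for weighted digraphs (see~\cite{buslov}): for any subset $T \subseteq \{0,1,\ldots,n\}$, the principal minor $\det \widetilde L[T,T]$ equals $\sum_F \pi_F$, where the sum runs over spanning incoming forests $F$ of $\widetilde G$ whose set of roots is exactly $\{0,1,\ldots,n\} \setminus T$. Applying this with $T = S \subseteq \{1,\ldots,n\}$ of size $n-i$ (vertex $0$ automatically lies in the root set, since it has no outgoing edges in $\widetilde G$) identifies $\det L[S,S] = \det\widetilde L[S,S]$ with the sum of $\pi_F$ over forests in $\mathcal{F}_{n-i}(\widetilde G)$ whose non-zero roots form the set $\{1,\ldots,n\} \setminus S$. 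As $S$ ranges over all $(n-i)$-subsets of $\{1,\ldots,n\}$, each $F \in \mathcal{F}_{n-i}(\widetilde G)$ is counted exactly once, giving $c_i = \sum_{F \in \mathcal{F}_{n-i}(\widetilde G)} \pi_F$.

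Finally, for the $d_i$: Lemma~\ref{lem:remove-1} implies $(\partial I - A)_{11} = \partial I - A'$, where $A'$ is the compartmental matrix of any model whose leak-augmented graph is $\widetilde G_1$. Applying the preceding argument to $A'$ and $\widetilde G_1$ (with $n-1$ in place of $n$) yields $d_i = \sum_{F \in \mathcal{F}_{n-i-1}(\widetilde G_1)} \pi_F$, completing the proof. The main obstacle is the Matrix-Tree step: one must carefully align the sign and vertex-labeling conventions between the compartmental matrix, the weighted Laplacian of $\widetilde G$, and the root-set indexing in the multi-root Matrix-Tree Theorem, so that summing the principal minors recovers each spanning incoming forest of the prescribed edge count exactly once with the correct weight $\pi_F$.
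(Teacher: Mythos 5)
Your proposal is correct, and the outer reduction is the same as the paper's: both use Proposition~\ref{prop:MSE} to identify $c_i$ and $d_i$ with coefficients of $\det(\lambda I - A)$ and $\det(\lambda I - A)_{11}$, and Lemma~\ref{lem:remove-1} to recognize $A_{11}$ as the compartmental matrix of a model with leak-augmented graph $\widetilde G_1$. Where you genuinely diverge is in the combinatorial engine. The paper isolates the statement ``characteristic-polynomial coefficients of a compartmental matrix are forest sums over $\widetilde G$'' as Proposition~\ref{prop:coeff} and proves it by strong induction on the number of leaks: the base case (no leaks) is Buslov's theorem on the characteristic polynomial of a Laplacian, and the inductive step splits the $(1,1)$ diagonal entry as $(\lambda + \sum_k a_{k1}) + a_{01}$ to write the determinant as a leak-free term plus $a_{01}\det(\lambda I - M)$, with $M$ the compartmental matrix attached to $\widetilde G_1$. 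You instead observe that $-A$ is exactly the principal submatrix of the (out-degree) Laplacian of $\widetilde G$ obtained by deleting row and column $0$, expand the characteristic polynomial into principal minors $(-1)^{n-i}\sum_{|S|=n-i}\det A[S,S]$, and invoke the all-minors/multi-root matrix-tree theorem to identify each principal minor $\det(-A)[S,S]$ with the forests rooted at $\{0,1,\dots,n\}\setminus S$; summing over $S$ recovers every forest in $\mathcal{F}_{n-i}(\widetilde G)$ exactly once since vertex $0$ is forced to be a root. Your route is non-inductive and handles leaks uniformly (they are just ordinary edges into vertex $0$), at the cost of importing the stronger all-minors matrix-tree theorem and its sign/orientation conventions --- precisely the connection the paper itself flags in Remark~\ref{rmk:connection-to-laplacian-results} but does not exploit; the paper's induction is more self-contained, needing only Buslov's leak-free statement. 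Your sign bookkeeping checks out: $(-1)^{n-i}$ from the characteristic polynomial cancels against $(-1)^{|S|}$ with $|S|=n-i$ from converting $A[S,S]$ to $(-A)[S,S]$.
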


\begin{remark} \label{rmk:BGMSS}
Theorem~\ref{thm:coeff-i-o-general} was extended recently by Bortner {\em et al.}\ to allow for models with input and output in distinct compartments~\cite{BGMSS}.
\end{remark}


The proof of Theorem~\ref{thm:coeff-i-o-general} requires the following result, which interprets the coefficients of the characteristic polynomial of a compartmental matrix. 
\begin{proposition} \label{prop:coeff}
Let $A$ be the compartmental matrix
of a linear compartmental model with $n$ compartments and leak-augmented graph $\widetilde G$.  
Write the characteristic polynomial of $A$ as:
	\begin{align*}
	\det (\lambda I - A) ~=~ \lambda^n + e_{n-1} \lambda^{n-1} + \cdots + e_0~.
	\end{align*}
Then $e_i$ (for $i=0,1,\dots,n-1$) is the sum over $(n-i)$-edge, spanning, incoming forests of
 $\widetilde G$, where each summand is the product of the edge labels in the forest:
	\begin{align*}
		e_i ~&=~ 
			\sum_{F \in \mathcal{F}_{n-i}( \widetilde G)} \pi_F   ~.
	\end{align*}
\end{proposition}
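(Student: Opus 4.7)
The plan is to identify $-A$ with an appropriate submatrix of the Laplacian of $\widetilde G$ and then invoke the All-Minors Matrix-Tree Theorem (Chaiken--Tutte).

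Let $\widetilde L$ denote the $(n+1)\times(n+1)$ out-Laplacian of $\widetilde G$ on vertex set $\{0,1,\ldots,n\}$: that is, $\widetilde L_{jj}=\sum_{i:\, j\to i\in\widetilde G}w(j\to i)$ and $\widetilde L_{ij}=-w(j\to i)$ for $i\ne j$, where $w(j\to i)$ is the label $a_{ij}$ of the edge $j\to i$ in $\widetilde G$. Comparing with the definition of the compartmental matrix in Section~\ref{sec:model}, the principal submatrix $\widehat L:=\widetilde L[\{1,\ldots,n\},\{1,\ldots,n\}]$ satisfies $A=-\widehat L$. Crucially, vertex $0$ has no outgoing edges in $\widetilde G$, so $0$ must be a root of every spanning incoming forest of $\widetilde G$ (a root being a vertex with no outgoing edge in $F$).

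Next, I would expand the characteristic polynomial as a sum over principal minors,
\[
\det(\lambda I-A)\;=\;\det(\lambda I+\widehat L)\;=\;\sum_{k=0}^{n}\lambda^{n-k}\sum_{\substack{S\subseteq\{1,\ldots,n\}\\|S|=k}}\det(\widehat L[S,S]),
\]
so that $e_i=\sum_{|S|=n-i}\det(\widehat L[S,S])$. Then I would apply the All-Minors Matrix-Tree Theorem, which asserts that for any $R\subseteq\{0,1,\ldots,n\}$,
\[
\det\bigl(\widetilde L[V\setminus R,\,V\setminus R]\bigr)\;=\;\sum_F\pi_F,
\]
where $F$ ranges over the spanning incoming forests of $\widetilde G$ whose root set is exactly $R$. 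Applied with $R=\{0\}\cup(\{1,\ldots,n\}\setminus S)$, this yields $\det(\widehat L[S,S])=\sum_F\pi_F$, summing over forests with root set $\{0\}\cup S^c$.

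Since $0$ is always forced to be a root of a spanning incoming forest of $\widetilde G$, and since the remaining roots range freely over all $(n-k)$-subsets of $\{1,\ldots,n\}$ as $S$ varies over $k$-subsets, the union over $|S|=k$ enumerates precisely the $k$-edge spanning incoming forests of $\widetilde G$:
\[
\sum_{|S|=k}\det(\widehat L[S,S])\;=\;\sum_{F\in\mathcal{F}_k(\widetilde G)}\pi_F.
\]
Setting $k=n-i$ gives the claim. The main obstacle is citing the correct root-specified version of the matrix-tree theorem and verifying the bookkeeping that matches subsets $S$ with possible root sets; the observation that $0$ is always a root (because it has no outgoing edges in $\widetilde G$) is precisely what makes this correspondence a bijection and ensures no double-counting. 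If a self-contained proof were desired, I would instead expand each $\det(\widehat L[S,S])$ as a signed sum over permutations and show that all terms not corresponding to an incoming forest cancel after summing over $S$, which essentially recapitulates Chaiken's combinatorial proof of the classical result.
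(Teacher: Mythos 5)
Your proof is correct, and it takes a genuinely different route from the paper's. The paper proves Proposition~\ref{prop:coeff} by strong induction on the pair (number of compartments, number of leaks): the leak-free base case is Buslov's theorem on the full Laplacian (Proposition~\ref{prop:buslov-neg-lap}), and the inductive step splits $\det(\lambda I - A)$ by multilinearity in the diagonal entry containing $a_{01}$ into $\det(\lambda I - N) + a_{01}\det(\lambda I - M)$, where $N$ drops the leak at compartment $1$ and $M$ is the compartmental matrix attached to $\widetilde G_1$ via Lemma~\ref{lem:remove-1}; the two summands account for the forests avoiding and containing the edge labeled $a_{01}$, respectively. You instead observe that $A$ is exactly the negative of the principal submatrix of the Laplacian of $\widetilde G$ obtained by deleting the row and column of vertex $0$, expand the characteristic polynomial as a sum of principal minors, and invoke the root-specified (all-minors) matrix-tree theorem~\cite{chaiken}; the fact that vertex $0$ has no outgoing edges in $\widetilde G$ is what makes the union over $k$-subsets $S$ enumerate $\mathcal{F}_k(\widetilde G)$ exactly once. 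Your bookkeeping checks out: a $k$-edge spanning incoming forest of $\widetilde G$ has root set of size $n+1-k$ necessarily containing $0$, so the correspondence $S \leftrightarrow \{0\}\cup S^c$ is a bijection and there is no double-counting. What each approach buys: yours is shorter, non-inductive, and makes the structural reason for the statement transparent, at the cost of citing the stronger root-specified version of the matrix-tree theorem (the paper itself acknowledges this connection in Remark~\ref{rmk:connection-to-laplacian-results} but only uses Buslov's weaker full-Laplacian statement); the paper's induction is more self-contained and its splitting step doubles as the mechanism behind the $\widetilde G_1$ graphs appearing in Theorem~\ref{thm:coeff-i-o-general}. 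If you wanted to avoid the citation entirely, your suggested fallback (a permutation-expansion with cancellation) would work but would essentially reprove Chaiken's result.
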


In the Appendix,
we prove Proposition~\ref{prop:coeff} and explain how it is related to similar results.

\begin{proof}[Proof of Theorem~\ref{thm:coeff-i-o-general}]
By Proposition~\ref{prop:MSE}, the coefficient $c_i$ of $y^{(i)}$ 
in the input-output equation \eqref{eq:coeff-i-o-general} is the coefficient of $\lambda^i$ in the characteristic polynomial $\det  (\lambda I - A)$ of the compartmental matrix $A$.  Hence, the desired result follows immediately from Proposition~\ref{prop:coeff}.  

Now consider the right-hand side of the input-output equation~\eqref{eq:coeff-i-o-general}. 
Let $A_{11}$ denote the matrix obtained from $A$ by removing row 1 and column 1.
By Lemma~\ref{lem:remove-1}, $A_{11}$ is the compartmental matrix for any model with leak-augmented graph $\widetilde G_1$.  
So, by Proposition~\ref{prop:coeff}, the sum 
$  \sum_{F \in \mathcal{F}_{n-i-1}( \widetilde G_1)} \pi_F $
equals the coefficient of $\lambda^i$ in the characteristic polynomial $\det  (\lambda I - A_{11})= \det  (\lambda I - A)_{11}$ (where the first identity matrix $I$ has size $n$ and the second has size $n-1$).  
This coefficient, by Proposition~\ref{prop:MSE}, equals $d_i$, and this completes the proof.
\end{proof}


\begin{remark}[Jacobian matrix of the coefficient map] \label{rmk:jac-mat}
In the setting of Theorem~\ref{thm:coeff-i-o-general},
each coefficient $c_k$ of the input-output equation
is the sum of products of edge labels of a forest, and thus is multilinear in the parameters $a_{ij}$.
Therefore, in the row of the Jacobian matrix corresponding to $c_k$, the entry in the column corresponding to some $a_{lm}$
is obtained from $c_k$ by setting $a_{lm}$=1 in those terms divisible by $a_{lm}$ and then setting all other terms to 0.
\end{remark}

\section{The singular locus: mammillary, catenary, and cycle models} \label{sec:singular}
In this section, we establish the singular-locus equations for the mammilary (star) and cycle models, which were displayed in Table~\ref{tab:summary} (Theorems~\ref{thm:mam} and~\ref{thm:cycle}).  We also state our conjecture for the singular-locus equation for the catenary (path) model (Conjecture~\ref{conj:cat}). We additionally pose a related conjecture for models that are formed by bidirectional trees, which include the catenary model (Conjecture~\ref{conj:tree}).

\subsection{Mammillary (star) models} \label{sec:mam}
\begin{theorem}[Mammillary] \label{thm:mam}
Assume $n \geq 2$.
The $n$-compartment mammillary (star) model in Figure~\ref{fig:cycle}
is generically locally identifiable, and the equation of the singular locus is:
	\begin{align} \label{eq:mam}
	(a_{12} a_{13} \dots a_{1,n} ) 
	\prod_{2 \leq i < j \leq n} \left(a_{1i} - a_{1j} \right)^2~.
	\end{align}
\end{theorem}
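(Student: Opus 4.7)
Generic local identifiability of the mammillary model is already established by Proposition~\ref{prop:gli}, so the remaining task is to derive equation~\eqref{eq:mam}. The model has $2n-1$ parameters ($a_{01}$ together with $a_{i1}, a_{1i}$ for $i=2,\dots,n$), and the input-output equation~\eqref{eq:coeff-i-o-general} has exactly $2n-1$ coefficients ($c_0,\dots,c_{n-1}$ and $d_0,\dots,d_{n-2}$), so Definition~\ref{def:eq-sing-locus}(1) applies and the singular-locus equation is $\det \jac(c)$, defined up to sign. My plan is to compute this determinant using the combinatorial formula of Theorem~\ref{thm:coeff-i-o-general} together with a block decomposition.

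I would first enumerate the relevant spanning incoming forests. For $\widetilde G_1$, every vertex $j \in \{2,\dots,n\}$ has a unique outgoing edge (to the sink $0$, with label $a_{1j}$), so the spanning incoming forests are precisely the subsets of these $n-1$ edges; hence $d_i = e_{n-1-i}(a_{12},\dots,a_{1n})$, the elementary symmetric polynomial. For $\widetilde G$, I would classify a spanning incoming forest by the set $S \subseteq \{2,\dots,n\}$ of indices $i$ for which the edge $i \to 1$ is used, together with the choice (possibly empty) of vertex $1$'s outgoing edge (to $0$, or to some $j \in \{2,\dots,n\}\setminus S$ to avoid a $2$-cycle with $j\to 1$). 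Setting $x_i := a_{1i}$, this classification yields
\[
c_{n-k} ~=~ e_k(x) \,+\, a_{01}\, e_{k-1}(x) \,+\, \sum_{j=2}^{n} a_{j1}\, e_{k-1}\bigl(x_2,\dots,\widehat{x_j},\dots,x_n\bigr).
\]

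The key structural observation is that each $d_i$ depends only on the $x_i$. Ordering the variables as $(a_{12},\dots,a_{1n},\; a_{01},\; a_{21},\dots,a_{n1})$ and the coefficients as $(d_0,\dots,d_{n-2},\; c_0,\dots,c_{n-1})$, the Jacobian becomes block lower triangular:
\[
\jac(c) ~=~ \begin{pmatrix} D & 0 \\ * & C \end{pmatrix},
\qquad
\det \jac(c) ~=~ (\det D)\cdot(\det C).
\]
The $(n-1)\times(n-1)$ block $D$ is (up to row reversal) the Jacobian of $(e_1,\dots,e_{n-1})$ in the variables $(x_2,\dots,x_n)$, so by the classical identity $\det D = \pm\prod_{2\le i<j\le n}(x_i - x_j)$. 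For the $n\times n$ block $C$, the derivatives $\partial c_{n-k}/\partial a_{01} = e_{k-1}(x)$ and $\partial c_{n-k}/\partial a_{j1} = e_{k-1}(x_2,\dots,\widehat{x_j},\dots,x_n)$ have the property that the row indexed by $c_0$ (i.e., $k=n$) has a single nonzero entry, namely $e_{n-1}(x) = a_{12}\cdots a_{1n}$ in the $a_{01}$-column (all others vanish, as $e_{n-1}$ of only $n-2$ variables is $0$). Expanding along that row yields $\det C = \pm (a_{12}\cdots a_{1n})\det M'$, where $M'$ is again the Jacobian of $(e_1,\dots,e_{n-1})$ with respect to $(x_2,\dots,x_n)$, hence $\pm$ the same Vandermonde.

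Multiplying the two factors, I obtain $\det \jac(c) = \pm (a_{12}\cdots a_{1n}) \prod_{2\le i<j\le n}(a_{1i}-a_{1j})^2$, which is equation~\eqref{eq:mam} up to sign, as required. The main obstacle will be the bookkeeping in the forest enumeration for the $c_i$ (in particular correctly handling the $2$-cycle avoidance when vertex $1$ selects an outgoing edge), and verifying that both $D$ and $M'$ really do coincide, up to row-order and sign, with the classical Jacobian of the elementary symmetric polynomials so that the Vandermonde identity can be invoked cleanly.
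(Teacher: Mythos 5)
Your proposal is correct and follows essentially the same route as the paper: the same forest enumeration yielding elementary symmetric polynomials (your formula $c_{n-k}=e_k+a_{01}e_{k-1}+\sum_j a_{j1}e_{k-1}(\hat x_j)$ is the correct one, and correctly handles the $2$-cycle avoidance), the same block-triangular decomposition of the Jacobian into two copies of the elementary-symmetric-polynomial Jacobian and the $1\times 1$ block $a_{12}\cdots a_{1n}$, and the same Vandermonde conclusion. The only difference is cosmetic: you invoke the classical identity $\det\bigl(\partial e_k/\partial x_j\bigr)=\pm\prod_{i<j}(x_i-x_j)$, whereas the paper reproves it via a degree count and a divisibility argument.
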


\begin{proof}
The compartmental matrix for this model is 
	\begin{align*} 
	A ~=~
	\begin{pmatrix}
	-a_{01}-(a_{21}+ \dots +a_{n1}) & a_{12} & a_{13} & \dots & a_{n1} \\ 
	a_{21} & - a_{21} & 0 & \dots & 0 \\
	a_{31} & 0 & -a_{13} & & 0 \\
	\vdots & \vdots  & & \ddots & \\
	a_{n1} & 0 & 0 &  & -a_{1n}  
	\end{pmatrix}~.
	\end{align*}
Let $E_{j} (x_1,\dots, x_m )$ denote the $j$-th elementary symmetric polynomial on 
$x_1,x_2, \dots, x_m$; and let
$E_{j} (\hat a_{1k} )$ denote the $j$-th elementary symmetric polynomial on 
$a_{12}, \dots, a_{1,k-1}, a_{1,k+1}, \dots, a_{1n}$. 
Then, the coefficients on the left-hand side of the input-output equation~\eqref{eq:i-o-general} are, by Theorem~\ref{thm:coeff-i-o-general}, the following:
	\begin{align*}
	c_i ~&=~ a_{01} E_{n-i-1}(a_{12}, \dots, a_{1n}) + 
		\left(
		a_{21}E_{n-i} (\hat a_{12} ) + 
		a_{31}E_{n-i} (\hat a_{13} ) + \dots  +
		a_{n1}E_{n-i} (\hat a_{1n} ) 		
		\right)  \\
		& \quad \quad
		+
		E_{n-i}(a_{12}, \dots, a_{1n}) ~ 
	\end{align*}
for $i=0,1,\dots, n-1$.   
As for the coefficients of the right-hand side of the input-output equation, they are as follows, by Proposition~\ref{prop:MSE}:
	\begin{align*}
	d_i ~=~  E_{n-i-1}(a_{12}, \dots, a_{1n})~ \quad \quad \text{for $i=0,1,\dots, n-2$~.	}
	\end{align*}
Consider the coefficient map $(c_{n-1},c_{n-2}, \dots, c_0, ~d_{n-2}, d_{n-3}, \dots, d_0)$.  Its Jacobian matrix, 
where the order of variables is $(a_{21}, a_{31}, \dots, a_{n1}, ~a_{01}, ~a_{12}, a_{13}, \dots, a_{1n})$,
 has the following form:
\[
\left( 
\begin{array}{c@{}c@{}c}
 \left[\begin{array}{c}
         M \\      \end{array}\right] & \star & \star \\     
         \mathbf{0} & \left[\begin{array}{c}
                       a_{12} a_{13}\cdots a_{1n}   \\                          \end{array}\right] & \star \\   
                        \mathbf{0} & \mathbf{0} & \left[ \begin{array}{c}
                                   M\\                                      \end{array}\right] \    \end{array}\right)~,
\]
where $M$ is the following $(n-1) \times (n-1)$ matrix:
	\begin{align*} 
	M ~=~
	\begin{pmatrix}
	1 & 1& \dots & 1 \\
	E_1(\hat a_{12}) & E_1(\hat a_{13}) & \dots & E_1(\hat a_{1n}) \\
	E_2(\hat a_{12}) & E_2(\hat a_{13}) & \dots & E_2(\hat a_{1n}) \\
	\vdots & \vdots &  & \vdots \\
	E_{n-2} (\hat a_{12}) & E_{n-2} (\hat a_{13}) & \dots & E_{n-2} (\hat a_{1n}) 	
	\end{pmatrix}~.
	\end{align*}
Thus, to prove the desired formula~\eqref{eq:mam}, we need only show that the
determinant of $M$ equals, up to sign, the {\em Vandermonde polynomial} on 
$(a_{12}, \dots, a_{1n})$:
	\begin{align} \label{eq:Van}
	\det M ~=~  \pm \prod_{2 \leq i < j \leq n} (a_{1i} - a_{1j})~.
	\end{align}	
To see this, note first that both polynomials have the same multidegree: the degree with respect to the $a_{1j}$'s of $\det M$ is $0+1+ \dots + (n-2)$ (because the entries in row-$i$ of $M$ have degree $i-1$), which equals ${n-1 \choose 2}$, and this is the degree of the Vandermonde polynomial on the right-hand side of equation~\eqref{eq:Van}.  Also, note that both polynomials are, up to sign, monic.  

So, to prove the claimed equality~\eqref{eq:Van}, it suffices to show that when $2 \leq i < j \leq n$, the term
$(a_{1i}-a_{1j})$ divides $\det M$.
Indeed, when $a_{1i}=a_{1j}$, then the columns of $M$ that correspond to $a_{i1}$ and $a_{1j}$ (namely, the $(i-1)$-st and 
$(j-1)$-st
columns) coincide, and thus $\det M = 0$.  Hence, $(a_{1i}-a_{1j}) | \det M$ (by the Nullstellensatz).
\end{proof}

\begin{corollary}
Let $n \geq 2$,
 and let $\mathcal M$ be the $n$-compartment mammillary (star) model in Figure~\ref{fig:cycle}.  If all rate parameters $ a_{01}, a_{12}, a_{21}, a_{13}, a_{31}, \dots a_{1n}, a_{n1}$ are positive and unique, then the coefficient map of $\mathcal M$ is locally one-to-one around the parameter point and thus the parameters can be recovered (up to a finite set) from input-output data. 
\end{corollary}

\subsection{Cycle models} \label{sec:cycle}

\begin{theorem}[Cycle] \label{thm:cycle}
Assume $n \geq 3$.
The $n$-compartment cycle model in Figure~\ref{fig:cycle}
is generically locally identifiable, and the equation of the singular locus is:
	\begin{align*}
a_{32}a_{43} \dots a_{n,n-1} a_{1,n}
	\prod_{2 \leq i < j \leq n}
		\left( a_{i+1,i} - a_{j+1,j}
			\right)~.
	\end{align*}
\end{theorem}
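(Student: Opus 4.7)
The plan is to exhibit an explicit factorization of the coefficient map that splits off a Vandermonde factor, select a natural $(n+1)$-tuple of coefficients whose Jacobian determinant equals the claimed polynomial, and verify that this particular minor really defines the singular locus in the sense of Definition~\ref{def:eq-sing-locus}(2). As a preliminary step, I would apply Theorem~\ref{thm:coeff-i-o-general} (or, equivalently, expand the characteristic polynomial of the compartmental matrix $A$ directly, since for the cycle $\lambda I - A$ is nearly lower bidiagonal) to obtain
\[
c_i = e_{n-i}(b_1, \ldots, b_n) \ \ (1 \leq i \leq n-1), \qquad
c_0 = a_{01}\, b_2 b_3 \cdots b_n, \qquad
d_i = e_{n-1-i}(b_2, \ldots, b_n),
\]
where $e_k$ is the $k$-th elementary symmetric polynomial, $b_1 := a_{01} + a_{21}$, $b_i := a_{i+1,i}$ for $2 \leq i \leq n-1$, and $b_n := a_{1,n}$. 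The key structural features are that $c_0$ depends on $a_{01}$ linearly (with coefficient $e_{n-1}(b_2,\ldots,b_n) = b_2 \cdots b_n$), and that the $d_i$'s comprise precisely the elementary symmetric polynomials in $(b_2, \ldots, b_n)$.

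Next, I would factor the coefficient map as $c = \tilde{\rho} \circ \tilde{\psi}$, where
\[
\tilde{\psi}\colon (a_{01}, a_{21}, b_2, \ldots, b_n) \ \longmapsto\ (a_{01},\, b_1,\, e_1(b_2, \ldots, b_n),\, \ldots,\, e_{n-1}(b_2, \ldots, b_n))
\]
and $\tilde{\rho}$ sends these $n+1$ quantities to the full vector of $2n-1$ coefficients. A block-triangular computation yields $\det \jac(\tilde{\psi}) = \pm \prod_{2 \leq i < j \leq n}(b_i - b_j)$, the Vandermonde-type factor appearing in the claimed equation. I would then choose the $n+1$ coefficients $(c_0, d_0, d_1, \ldots, d_{n-2}, c_{n-1})$ for the restricted map $d$ of Definition~\ref{def:eq-sing-locus}(2) and compute the corresponding Jacobian of $\tilde{\rho}$: the $a_{01}$-column has its unique nonzero entry $e_{n-1}$ in the $c_0$-row, the $b_1$-column has its unique nonzero entry $1$ in the $c_{n-1}$-row, and the remaining $(n-1)\times(n-1)$ block on the rows $d_0, \ldots, d_{n-2}$ and columns $e_1, \ldots, e_{n-1}$ is the anti-diagonal permutation matrix. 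So the restricted Jacobian of $\tilde{\rho}$ has determinant $\pm\, e_{n-1}(b_2,\ldots,b_n) = \pm b_2 \cdots b_n$, and composing with $\jac(\tilde{\psi})$ via the chain rule gives the claimed equation $\pm\, a_{32} a_{43} \cdots a_{n,n-1} a_{1,n} \cdot \prod_{2 \leq i < j \leq n}(a_{i+1,i} - a_{j+1,j})$.

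The step I expect to be the main obstacle is verifying that this particular minor truly defines the singular locus, i.e., that $\det \jac(d) = 0$ if and only if $\jac(c)$ is rank-deficient. One direction is immediate. For the converse I would argue in two cases. If the Vandermonde vanishes (two of $b_2, \ldots, b_n$ coincide), then $\jac(\tilde{\psi})$ already drops rank, and hence so does $\jac(c) = \jac(\tilde{\rho}) \cdot \jac(\tilde{\psi})$. If instead $e_{n-1}(b_2, \ldots, b_n) = 0$ (some $b_i = 0$ for $i \geq 2$) while the Vandermonde is nonzero, then, since $a_{01}$ appears among the coefficients only through $c_0 = a_{01}\, e_{n-1}(b_2, \ldots, b_n)$, the entire $a_{01}$-column of $\jac(\tilde{\rho})$ vanishes; thus $\mathrm{rank}\,\jac(\tilde{\rho}) \leq n$, and because $\jac(\tilde{\psi})$ is invertible in this subcase, also $\mathrm{rank}\,\jac(c) \leq n < n+1$. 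Generic local identifiability follows for free from the fact that the singular locus is a proper subvariety.
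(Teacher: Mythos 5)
Your proposal is correct and is essentially the paper's proof: you derive the same coefficient formulas $c_0=a_{01}E_{n-1}$, $c_i=(a_{01}+a_{21})E_{n-i-1}+E_{n-i}$, $d_i=E_{n-i-1}$, select the same $n+1$ coefficients $(c_0,\,d_0,\dots,d_{n-2},\,c_{n-1})$ (the paper's matrix $\widetilde J$, obtained by discarding rows $2$ through $n-1$), and arrive at the same factorization $\pm E_{n-1}\cdot\prod_{2\le i<j\le n}(a_{i+1,i}-a_{j+1,j})$ via the Vandermonde identity for the Jacobian of the elementary symmetric polynomials. The only real difference is packaging --- you run the computation through the chain rule for $c=\tilde\rho\circ\tilde\psi$ rather than by block-reducing $\jac(c)$ directly --- and this actually makes the required ``$\det\jac(d)=0$ iff $\jac(c)$ is rank-deficient'' verification (the vanishing $a_{01}$-column of $\jac(\tilde\rho)$ when $E_{n-1}=0$) more explicit than the paper's brief assertion that every nonzero maximal minor of $\jac(c)$ is a scalar multiple of $\det\widetilde J$.
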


\begin{proof}
The compartmental matrix for this model is 
	\begin{align*} 
	A ~=~
	\begin{pmatrix}
	-a_{01}-a_{21} & 0 & 0 & \dots &0 & a_{1n} \\ 
	a_{21} & - a_{32} & 0 & \dots & 0 & 0 \\
	0 & a_{32} & -a_{43} & & & 0 \\
	\vdots &    & \ddots & \ddots & & \vdots \\
	0 &  & &a_{n-1,n-2}&-a_{n,n-1}& 0\\
	0 & 0 & \dots &0 & a_{n,n-1} & -a_{1n}  
	\end{pmatrix}~.
	\end{align*}

Let $E_{j}:=E (a_{32},a_{43}, \dots,a_{1,n} )$ denote the $j$-th elementary symmetric polynomial on 
\\ $a_{32}, a_{43},\dots,a_{1,n}$; and let
$E_{j} (\hat a_{k+1,k} )$ denote the $j$-th elementary symmetric polynomial on 
$a_{32}, a_{43}, \dots,a_{k,k-1}$, $a_{k+2,k+1}, \dots, a_{1,n}$, where  $E_{j} (\hat a_{n+1,n} ):=E_j (\hat a_{1,n})$.
Then, the coefficients on the left-hand side of the input-output equation~\eqref{eq:coeff-i-o-general} are, by Theorem~\ref{thm:coeff-i-o-general}, the following:
	\begin{align} \label{eq:coef-cycle-1}
	c_0 ~&=~ a_{01} E_{n-1}~, \quad \quad {\rm and} \\
	c_i ~&=~ \left(a_{01}+a_{21}\right) E_{n-i-1}  +  E_{n-i-2}
			\quad \quad \text{(for } i = 1,2,\dots, n-1\text{)}~.
	\end{align}
As for the coefficients of the right-hand side of the input-output equation, they are as follows, by Proposition~\ref{prop:MSE}:
	\begin{align}\label{eq:coef-cycle-2}
	d_i ~=~  E_{n-i-1}~ \quad \quad \text{(for $i=0,1,\dots, n-2$).	}
	\end{align}
	
Consider the coefficient map $(c_0, c_1, \dots, c_{n-1}, ~ d_0,d_1, \dots, d_{n-2})$.  Its Jacobian matrix, 
where the order of variables is $(a_{01},a_{21}, a_{32}, \dots,  a_{1n})$, has
the following form:
\begin{equation} \label{eq:jac-cycle}
J~=~
\left( 
\begin{array}{c@{}c}
 \left[\begin{array}{cc}
         E_{n-1} & 0 \\
         E_{n-2} & E_{n-2} \\
	\vdots & \vdots \\
         E_{1} & E_{1} \\
         1& 1
          \\      \end{array}\right] & \star  \\     
                        \mathbf{0} &  \left[ \begin{array}{c}
                                   M\\                                      \end{array}\right] \    \end{array}\right)~,
\end{equation}
where $M$ is the following $(n-1) \times (n-1)$ matrix:
	\begin{align*} 
	M ~=~
	\begin{pmatrix}
	E_{n-2} (\hat a_{32}) & E_{n-2} (\hat a_{43}) & \dots & E_{n-2} (\hat a_{1n}) 	\\
	E_{n-3} (\hat a_{32}) & E_{n-3} (\hat a_{43}) & \dots & E_{n-3} (\hat a_{1n}) 	\\
	\vdots & \vdots &  & \vdots \\
	E_{1} (\hat a_{32}) & E_{1} (\hat a_{43}) & \dots & E_{1} (\hat a_{1n}) 	\\
	1 & 1& \dots & 1 \\
	\end{pmatrix}~.
	\end{align*}
In the upper-left $(n \times 2)$-block of the matrix $J$ in equation~\eqref{eq:jac-cycle}, rows 2 through $(n-1)$ are scalar multiples of the bottom row of 1's.  
Thus, if we let $\widetilde J$ denote the square matrix (of size $n+1$) obtained by removing from $J$ rows 2 through $(n-1)$, then the singular-locus equation of the model is $\det \widetilde J$.  
Indeed, all nonzero $(n+1)\times (n+1)$ minors of $J$ are scalar multiples of $\det \widetilde J$, and thus the singular locus is defined by the single equation $\det \widetilde J=0$.

From equality~\eqref{eq:Van} in the proof of Theorem~\ref{thm:mam}, we know $\det M= \pm 
\prod_{2 \leq i < j \leq n}
		\left( a_{i+1,i} - a_{j+1,j} \right)$.
	 	Thus, the equation of the singular locus is, up to sign, as follows:
\[
\det \widetilde J ~=~ (E_{n-1}) (\det M) ~=~ \pm a_{32}a_{43} \dots a_{n,n-1} a_{1,n}
	\prod_{2 \leq i < j \leq n}
		\left( a_{i+1,i} - a_{j+1,j}
			\right)~.
			\]%
\end{proof}


\begin{corollary}
Let $n \geq 3$, and let $\mathcal M$ be the $n$-compartment cycle model in Figure~\ref{fig:cycle}.  If all rate parameters $a_{01}, a_{21}, a_{32},a_{43}, \dots,a_{1,n}$ are positive and unique, then the coefficient map of $\mathcal M$ is locally one-to-one around the parameter point and the parameters 
can be recovered (up to a finite set) from input-output data. 
\end{corollary}

\subsection{Catenary (path) models} \label{sec:cat}
\begin{conjecture} \label{conj:cat}
Assume $n \geq 2$.
For the  $n$-compartment catenary (path) model in Figure~\ref{fig:cat},
the equation of the singular locus is:
	\begin{align} \label{eq:cat}
	a_{12}^{n-1} 
	(a_{21} a_{23})^{n-2} 
	(a_{32} a_{34})^{n-3}
	\dots
	(a_{n-1,n-2} a_{n-1,n})~.
	\end{align}
\end{conjecture}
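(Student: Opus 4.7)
The plan is to mirror the structure of the proofs of Theorems~\ref{thm:mam} and~\ref{thm:cycle}: use Theorem~\ref{thm:coeff-i-o-general} to write out the coefficients $c_i$ and $d_i$ combinatorially, then identify a square submatrix of the Jacobian matrix of the coefficient map whose determinant equals the singular-locus equation up to sign. In the catenary, the leak-augmented graph $\widetilde G$ has only ``chain'' edges $i \leftrightarrow i+1$ for $1 \leq i \leq n-1$ together with the leak edge $1 \to 0$. Hence a spanning incoming forest on $\{0,1,\dots,n\}$ is determined by choosing, for each vertex $i \geq 1$, either ``no outgoing edge'' (making $i$ a root) or a neighbor as the head of its outgoing edge (with $i=1$ additionally allowed to point at $0$). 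Each $c_i$ (respectively $d_i$) then decomposes as a sum over such configurations with exactly $n-i$ (respectively $n-i-1$) edges.

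Next I would form the Jacobian $J$ of the coefficient map $(c_0,\dots,c_{n-1},d_0,\dots,d_{n-2})$ with respect to the $2n-1$ parameters $(a_{01},a_{12},a_{21},a_{23},a_{32},a_{34},\dots,a_{n-1,n},a_{n,n-1})$, and identify a $(2n-1) \times (2n-1)$ submatrix $\widetilde J$ whose determinant is the singular-locus equation. A degree count supports the conjecture: since $c_i$ has degree $n-i$ and $d_i$ has degree $n-i-1$ as polynomials in the parameters, the partial derivatives have degrees $n-i-1$ and $n-i-2$ respectively, and summing the maximum row degrees gives $\sum_{i=0}^{n-1}(n-i-1) + \sum_{i=0}^{n-2}(n-i-2) = (n-1)^2$, which matches the degree of the conjectured polynomial~\eqref{eq:cat}. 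Thus~\eqref{eq:cat} will equal $\det \widetilde J$ (up to scalar) provided divisibility and matching of leading behavior are verified.

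For divisibility, I would proceed factor by factor: for each parameter $a_{ij}$ appearing with exponent $e$ in~\eqref{eq:cat}, show that setting $a_{ij}=0$ produces enough zero entries in $\widetilde J$ so that $a_{ij}^e$ divides $\det \widetilde J$. A natural attempt is induction on $n$: the parameter $a_{n,n-1}$ does not appear in~\eqref{eq:cat} and enters only those coefficients involving vertex $n$, so a Laplace-type expansion along the row or column corresponding to $a_{n,n-1}$ should reduce the determinant to a signed product of monomials times the analogous determinant for the $(n-1)$-compartment catenary. The exponent pattern $n-1, n-2, \dots, 1$ is suggestive of such a telescoping reduction as vertices are peeled off the end of the chain.

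The main obstacle is the determinant computation itself. Unlike the mammillary case, where the central block reduces to a transposed Vandermonde, or the cycle case, where a Vandermonde appears after factoring out $E_{n-1}$, the catenary coefficients do not admit a clean elementary-symmetric description; exhibiting the claimed pure-monomial factorization will likely require either a careful row/column reduction that unmasks an upper-triangular structure, or a combinatorial cancellation argument based on the forest description of Theorem~\ref{thm:coeff-i-o-general}. A secondary challenge is to verify that the $2n-1$ rows chosen for $\widetilde J$ fully capture the rank behavior of $J$ (so that $\det \widetilde J$ is actually the singular-locus equation, not merely divisible by it), analogous to the row-scaling observation that was critical in the cycle proof.
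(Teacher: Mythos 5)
The statement you are trying to prove is stated in the paper as a \emph{conjecture}, not a theorem: the authors explicitly remark that the structure of \eqref{eq:cat} suggests an induction on $n$ but that they ``do not know how to complete, for such a proof, the inductive step.'' What they actually prove is the much weaker Proposition~\ref{prop:cat}, namely that $a_{21}$ and $a_{12}, a_{23}, \dots, a_{n-1,n}$ each divide the singular-locus equation (with no control on multiplicities, and nothing about $a_{32},a_{43},\dots,a_{n,n-1}$ beyond what the conjecture predicts, i.e.\ that $a_{n,n-1}$ should not appear at all). Your proposal is an outline in the same spirit --- write the $c_i$ and $d_i$ via the forest formula of Theorem~\ref{thm:coeff-i-o-general}, then try to extract a monomial factorization of the Jacobian determinant --- but it stops exactly where the paper stops: the ``main obstacle'' you name (the determinant computation, the telescoping Laplace expansion, the verification of the multiplicities $n-1, n-2, \dots, 1$) is precisely the open inductive step. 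As written, nothing in your argument establishes that $a_{12}^{\,n-1}$ (as opposed to $a_{12}^{\,1}$) divides the determinant, nor that the determinant is a pure monomial with no additional irreducible factors; the degree count $(n-1)^2$ is only an upper bound on $\deg \det J$ and does not exclude cancellation or non-monomial factors of the correct total degree. So this is a genuine gap, and indeed the open problem itself.

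Two smaller corrections to your setup. First, for the catenary there is no row-selection issue: the model has $|E|+|Leak| = (2n-2)+1 = 2n-1$ parameters and the input-output equation has exactly $n + (n-1) = 2n-1$ coefficients, so by Definition~\ref{def:eq-sing-locus}(1) the singular-locus equation is $\det(\jac(c))$ for the full (already square) Jacobian --- the analogue of the cycle proof's passage from $J$ to $\widetilde J$ is not needed. Second, if you want a concrete partial result, the paper's Proposition~\ref{prop:cat} shows how far elementary row operations get you: after the substitutions Row~$1 \mapsto$ Row~$1 - a_{01}\,$Row~$(n+1)$, Row~$2 \mapsto$ Row~$2 - a_{01}\,$Row~$(n+2) - $Row~$(n+1)$, and Row~$n \mapsto$ Row~$n - $Row~$(2n-1)$, one exposes the factors $a_{12}a_{23}\cdots a_{n-1,n}$ and $a_{21}$, but extracting the higher powers requires an idea beyond this, which neither you nor the authors supply.
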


\begin{remark}
The structure of the conjectured equation~\eqref{eq:cat} suggests a proof by induction on $n$, but we currently do not know how to complete, for such a proof, the inductive step.
\end{remark}

%
We can prove the following weaker version of Conjecture~\ref{conj:cat}:
\begin{proposition} \label{prop:cat}
For the  $n$-compartment catenary (path) model in Figure~\ref{fig:cat}, the following parameters divide
the equation of the singular locus:
	\begin{align*}
	a_{21} \quad \quad {\rm and} \quad \quad
	a_{12}, ~a_{23}, ~...~,
	~ a_{n-1,n}~.
	\end{align*}
\end{proposition}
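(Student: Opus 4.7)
The plan is to show, for each parameter $a$ in the list $\{a_{21}, a_{12}, a_{23}, \ldots, a_{n-1,n}\}$, that $a$ divides the singular-locus equation $f$. Since the catenary model has $|E|+|Leak| = 2n-1$, which equals the number of coefficients in the input-output equation, Definition~\ref{def:eq-sing-locus}(1) identifies $f$ as $\det(\jac(c))$; hence it suffices to prove that $\jac(c)$ is rank-deficient on the hyperplane $\{a = 0\}$.

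My starting observation, via Theorem~\ref{thm:coeff-i-o-general}, is that the leak-augmented graph $\widetilde{G}$ has a \emph{unique} spanning incoming tree: vertex $0$ is the only vertex with no outgoing edges in $\widetilde{G}$, so it must be the root, and from there each vertex $i \geq 1$ is forced to use its downward edge to $i-1$ (any choice of upward edge would create a cycle since vertex $n$ has only a downward option). This gives the monomial $c_0 = a_{01} a_{12} a_{23} \cdots a_{n-1,n}$. The same reasoning applied to $\widetilde{G}_1$ gives $d_0 = a_{12} a_{23} \cdots a_{n-1,n}$.

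For the parameters $a_{i,i+1}$ with $i \in \{1,\ldots,n-1\}$, I would argue that both $c_0$ and $d_0$ are monomials divisible by $a_{i,i+1}$, so on the hyperplane $\{a_{i,i+1}=0\}$ every partial derivative of $c_0$ or $d_0$ with respect to any variable other than $a_{i,i+1}$ still carries the factor $a_{i,i+1}$ and hence vanishes. Consequently, at any point with $a_{i,i+1}=0$, the rows of $\jac(c)$ indexed by $c_0$ and $d_0$ are both supported only in the single column indexed by $a_{i,i+1}$, so they are scalar multiples of one another. Hence $\det(\jac(c))$ vanishes on $\{a_{i,i+1}=0\}$, showing $a_{i,i+1} \mid f$.

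The main obstacle is the parameter $a_{21}$: since the unique spanning tree of $\widetilde{G}$ uses the leak edge $1 \to 0$ rather than the edge $1 \to 2$ labeled $a_{21}$, the variable $a_{21}$ appears in neither $c_0$ nor $d_0$, and the monomial argument fails. I would instead exploit a block-triangular structure: setting $a_{21}=0$ eliminates the only outflow from compartment $1$ into $\{2,\ldots,n\}$, so $A$ becomes block upper-triangular with blocks indexed by $\{1\}$ and $\{2,\ldots,n\}$, and by Lemma~\ref{lem:remove-1} the latter block equals $A_{11}$. This yields $\det(\lambda I - A)|_{a_{21}=0} = (\lambda + a_{01})\det(\lambda I - A_{11})$, i.e.\ $c(\lambda) = (\lambda + a_{01}) d(\lambda)$, which gives the coefficient relations $c_i = a_{01} d_i + d_{i-1}$ for $i=0,\ldots,n-1$ (with $d_{-1}=0$, $d_{n-1}=1$). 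Thus, on $\{a_{21}=0\}$, the coefficient map factors through the $n$-dimensional parameterization $(a_{01}, d_0, \ldots, d_{n-2})$, so the submatrix of $\jac(c)$ obtained by deleting the column for $a_{21}$ has rank at most $n$; including that column contributes at most one further dimension, giving total rank at most $n+1$, strictly less than $2n-1$ once $n \geq 3$. Hence $a_{21}$ divides $f$.
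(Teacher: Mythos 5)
Your proposal is correct, but it takes a genuinely different route from the paper's. You handle the two groups of parameters by two separate, fairly conceptual arguments: for $a_{12},\dots,a_{n-1,n}$ you observe that $\widetilde G$ and $\widetilde G_1$ each have a unique spanning incoming tree, so $c_0$ and $d_0$ are monomials sharing the factor $a_{i,i+1}$, and hence on the hyperplane $\{a_{i,i+1}=0\}$ the two corresponding rows of $\jac(c)$ are each supported in the single column indexed by $a_{i,i+1}$ and are therefore linearly dependent; for $a_{21}$ you use the block-triangular factorization $\det(\lambda I-A)\big|_{a_{21}=0}=(\lambda+a_{01})\det(\lambda I-A_{11})$ (via Lemma~\ref{lem:remove-1}) to show the restricted coefficient map factors through $(a_{01},d_0,\dots,d_{n-2})\in\R^n$, giving a chain-rule rank bound of $n+1<2n-1$. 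The paper instead works with the single $(2n-1)\times(2n-1)$ Jacobian throughout: it performs explicit elementary row operations, reorders columns, and exhibits a $3\times(2n-1)$ submatrix whose reduced form shows directly that $a_{12}a_{23}\cdots a_{n-1,n}$ and $a_{21}$ divide the determinant; the key combinatorial step there is identifying the $(n-1)$-edge incoming forests of $\widetilde G$ containing the edge labeled $a_{21}$, which makes every entry of the relevant reduced row a multiple of $a_{21}$. Your argument is shorter and avoids the forest enumeration, while the paper's computation exposes more of the Jacobian's structure (presumably with an eye toward Conjecture~\ref{conj:cat}). One point worth flagging: your $a_{21}$ argument genuinely requires $n\geq 3$, as you note; this is consistent with the statement (for $n=2$ the singular locus is just $a_{12}$, so $a_{21}$ does not divide it), and the paper's own proof also implicitly assumes $n\geq 3$, but neither your write-up nor the proposition as printed makes that hypothesis explicit.
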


\begin{proof}
By Theorem~\ref{thm:coeff-i-o-general}, the coefficients of the input-output equation of the catenary model arise from spanning forests of the following graphs:
\begin{center}
	\begin{tikzpicture}[scale=1.8]
 	\draw (-1,0) circle (0.2);	
 	\draw (0,0) circle (0.2);	
 	\draw (1,0) circle (0.2);	
 	\draw (2,0) circle (0.2);	
 	\draw (4,0) circle (0.2);	
    	\node[] at (-1, 0) {0};
    	\node[] at (0, 0) {1};
    	\node[] at (1, 0) {$2$};
    	\node[] at (2, 0) {$3$};
    	\node[] at (3, 0) {$\dots$};
    	\node[] at (4, 0) {$n$};
	 \draw[<-] (0.3, 0.05) -- (0.7, 0.05);	
	 \draw[->] (0.3, -0.05) -- (0.7, -0.05);	
	 \draw[<-] (1.3, 0.05) -- (1.7, 0.05);	
	 \draw[->] (1.3, -0.05) -- (1.7, -0.05);	
	 \draw[<-] (2.3, 0.05) -- (2.7, 0.05);	
	 \draw[->] (2.3, -0.05) -- (2.7, -0.05);	
	 \draw[<-] (3.3, 0.05) -- (3.7, 0.05);	
	 \draw[->] (3.3, -0.05) -- (3.7, -0.05);	
   	 \node[] at (0.5, 0.2) {$a_{12}$};
   	 \node[] at (1.5, 0.2) {$a_{23}$};
   	 \node[] at (2.5, 0.2) {$a_{34}$};
   	 \node[] at (3.5, 0.2) {$a_{n-1,n}$};
   	 \node[] at (0.5, -0.2) {$a_{21}$};
   	 \node[] at (1.5, -0.2) {$a_{32}$};
   	 \node[] at (2.5, -0.2) {$a_{43}$};
   	 \node[] at (3.5, -0.2) {$a_{n,n-1}$};
	 \draw[<-] (-0.7, 0) -- (-0.3, 0);	
   	 \node[] at (-0.5, -0.2) {$a_{01}$};
    	\node[above] at (4.2, -0.8) {$\widetilde{G}$};
\draw (-1.5,-.8) rectangle (4.5, .5);
	\end{tikzpicture}
	\begin{tikzpicture}[scale=1.8]
 	\draw (0,0) circle (0.2);	
 	\draw (1,0) circle (0.2);	
 	\draw (2,0) circle (0.2);	
 	\draw (4,0) circle (0.2);	
    	\node[] at (0, 0) {0};
    	\node[] at (1, 0) {$2$};
    	\node[] at (2, 0) {$3$};
    	\node[] at (3, 0) {$\dots$};
    	\node[] at (4, 0) {$n$};
	 \draw[<-] (0.3, 0.05) -- (0.7, 0.05);	
	 \draw[<-] (1.3, 0.05) -- (1.7, 0.05);	
	 \draw[->] (1.3, -0.05) -- (1.7, -0.05);	
	 \draw[<-] (2.3, 0.05) -- (2.7, 0.05);	
	 \draw[->] (2.3, -0.05) -- (2.7, -0.05);	
	 \draw[<-] (3.3, 0.05) -- (3.7, 0.05);	
	 \draw[->] (3.3, -0.05) -- (3.7, -0.05);	
   	 \node[] at (0.5, 0.2) {$a_{12}$};
   	 \node[] at (1.5, 0.2) {$a_{23}$};
   	 \node[] at (2.5, 0.2) {$a_{34}$};
   	 \node[] at (3.5, 0.2) {$a_{n-1,n}$};
   	 \node[] at (1.5, -0.2) {$a_{32}$};
   	 \node[] at (2.5, -0.2) {$a_{43}$};
   	 \node[] at (3.5, -0.2) {$a_{n,n-1}$};
    	\node[above] at (4.2, -0.8) {$\widetilde{G}_1$};
\draw (-0.5,-.8) rectangle (4.5, .5);
	\end{tikzpicture}
\end{center}
More specifically, some of the coefficients are as follows:
\begin{alignat}{2} \label{eq:coeffs-pf}
c_{0} & = (a_{01}) a_{12}a_{23} \dots a_{n-1,n}   & \mathrm{ (corresponds~to~Row~1)} \notag
\\
c_1 &= 	\sum_{F \in \mathcal{F}_{n-1}( \widetilde G)} \pi_F
  &  \mathrm{ (Row~2)}
\notag
\\
c_{n-1} &=  (a_{01}+a_{21}) + a_{12} +a_{23}  +a_{32} + \dots +a_{n-1,n} + a_{n,n-1}
  &  \mathrm{ (Row~} n\mathrm{)}
\\
d_0 &= a_{12}a_{23} \dots a_{n-1,n}    & \mathrm{ (Row~} n+1\mathrm{)}
\notag
\\
d_1 &= \sum_{F \in \mathcal{F}_{n-1}( \widetilde G_1)} \pi_F ~=
\sum_{F \in \mathcal{F}_{n-1}( \widetilde G) ~:~ F \mathrm{~does~not~involve~} a_{01} {\rm ~or~}  a_{21}} \pi_F
  &  \mathrm{ (Row~} n+2\mathrm{)}
\notag
\\
d_{n-2} &=  a_{12} +a_{23}  +a_{32} + \dots +a_{n-1,n} + a_{n,n-1}  & \mathrm{ (Row~} 2n-1\mathrm{)}
\notag
\end{alignat}
For each coefficient in~\eqref{eq:coeffs-pf}, we indicated the corresponding row of the 
$(2n-1) \times (2n-1)$ Jacobian matrix for the coefficient map $(c_0,~c_1,~\dots,~c_{n-1}, ~d_{0},~d_1,~\dots,~d_{n-2})$.

Perform the following elementary row operations (which do not affect the determinant) on the Jacobian matrix:
\begin{enumerate}[(i)]
\item Row $1$ := Row $1$ -  $a_{01}$Row $(n+1)$  
\item	 Row $2$ := Row $2$ -  $a_{01}$Row $(n+2)$ - Row $(n + 1)$  
\item  Row $n$ :=  Row $n$ - Row $(2n-1)$.
\end{enumerate}
Next, we reorder the columns so that the first four columns are indexed by $a_{01}, ~a_{n,n-1}, ~a_{21}, ~a_{12}$.  We claim that in the resulting matrix, the submatrix formed by Rows 1, 2, and $n$ has the following form:
\begin{align} \label{eq:matrix-star}
 \left( \begin{array}{cccc|ccc}
a_{12}a_{23}a_{34}...a_{n-1,n} & 0 & 0 & 0 & 0 & \ldots & 0 \\
* & a_{21}a_{32}...a_{n-1,n-2} & * & 0 & \bigstar & \ldots & \bigstar \\
1 & 0 & 1 & 0 & 0 & \ldots & 0\\
 \end{array} \right)
 \end{align}
The forms of the first and third rows follow from~\eqref{eq:coeffs-pf} and the row operations (i) and (iii).  As for the second row of the matrix~\eqref{eq:matrix-star}, 
consider an entry that is {\em not} labeled by $*$, i.e., an entry in a column indexed by some $a_{ij}$ with $a_{ij} \neq a_{01}, a_{21}$.  This entry, via a straightforward argument using~\eqref{eq:coeffs-pf} and the row operation (iii), is the following sum over the $(n-1)$-edge incoming forests of $\widetilde{G}$ that involve both edges $a_{21}$ and $a_{ij}$:
 \[
 \sum_{H \sqcup \{a_{21}, a_{ij} \} \in \mathcal{F}_{n-i}( \widetilde G)} a_{21} \pi_H   ~.
 \]	
(Here $\sqcup$ denotes disjoint union.)  Each forest in such a sum has the following form, for some $k=2,3,\dots, n$:

\begin{center}
	\begin{tikzpicture}[scale=2]
 	\draw (-1,0) circle (0.2);	
 	\draw (0,0) circle (0.2);	
 	\draw (1,0) circle (0.2);	
 	\draw (3,0) circle (0.2);	
 	\draw (5,0) circle (0.28);	
 	\draw (6,0) circle (0.2);	
    	\node[] at (-1, 0) {0};
    	\node[] at (0, 0) {1};
    	\node[] at (1, 0) {$2$};
    	\node[] at (2, 0) {$\dots$};
    	\node[] at (3, 0) {$k$};
    	\node[] at (4, 0) {$\dots$};
    	\node[] at (5, 0) {$n-1$};
    	\node[] at (6, 0) {$n$};
	 \draw[->] (0.3, -0.05) -- (0.7, -0.05);	
	 \draw[->] (1.3, -0.05) -- (1.7, -0.05);	
	 \draw[->] (2.3, -0.05) -- (2.7, -0.05);	
	 \draw[<-] (3.3, 0.05) -- (3.7, 0.05);	
	 \draw[<-] (4.3, 0.05) -- (4.7, 0.05);	
	 \draw[<-] (5.3, 0.05) -- (5.7, 0.05);	
   	 \node[] at (3.5, 0.2) {$a_{k,k+1}$};
   	 \node[] at (4.4, 0.2) {$a_{n-2,n-1}$};
   	 \node[] at (5.5, 0.25) {$a_{n-1,n}$};
   	 \node[] at (0.5, -0.2) {$a_{21}$};
   	 \node[] at (1.5, -0.2) {$a_{32}$};
   	 \node[] at (2.5, -0.2) {$a_{k,k-1}$};
	\end{tikzpicture}
\end{center}

The only such forest involving the edge $a_{n,n-1}$ is the $k=n$ case, so the $(2,2)$-entry in matrix~\eqref{eq:matrix-star} is indeed $a_{21}a_{32}...a_{n-1,n-2}$.  Also, note that $a_{21}$ divides all 
entries labeled by $\bigstar$.

Next, it is straightforward to row-reduce the matrix~\eqref{eq:matrix-star}, without affecting the determinant or the values of the entries labeled by $\bigstar$, to obtain:
\[ \left( \begin{array}{cccc|ccc}
a_{12}a_{23}a_{34}...a_{n-1,n} & 0 & 0 & 0 & 0 & \ldots & 0 \\
0 & a_{21}a_{32}...a_{n-1,n-2} & 0 & 0 & \bigstar & \ldots & \bigstar \\
0 & 0 & 1 & 0 & 0 & \ldots & 0\\
 \end{array} \right)\] 
Thus, from examining Rows 1 and 2, and the fact that all $\bigstar$-labeled entries are multiples of $a_{21}$, we conclude that $a_{21}$ and $a_{12}a_{23}a_{34}...a_{n-1,n}$ both divide 
the determinant of the full Jacobian matrix. This determinant is the singular-locus equation, so we are done.
\end{proof}

\begin{rmk}  
Conjecture \ref{conj:cat} 
asserts that all parameters except $a_{n,n-1}$ and (the leak) $a_{01}$ divide the singular-locus equation of the catenary model.  
For some of these parameters, this assertion was proven in Proposition \ref{prop:cat}.  For the remaining parameters, namely, 
$a_{32}, a_{43}, \ldots, a_{n-1,n-2}$ (which form a path from compartment $2$ to compartment $n-1$), we hope to prove in the future that they too divide the singular-locus equation.
\end{rmk}

\subsection{Tree conjecture}
In this subsection, 
we generalize Conjecture~\ref{conj:cat}, which pertained to catenary (path) models,
 to ``tree models'' (Conjecture~\ref{conj:tree}).
To motivate the new conjecture, we begin by revisiting the 4-compartment catenary and mammillary models.  
We depict these models in Figure~\ref{fig:multiplicity}, where 
instead of labeling the edge $(i,j)$ with $a_{ji}$, we label the edge with the
{\em multiplicity} of $a_{ji}$ in the equation of the singular locus, 
i.e., the largest $p$ such that $a_{ji}^p$ divides the equation (recall Conjecture~\ref{conj:cat} and Theorem~\ref{thm:mam}).

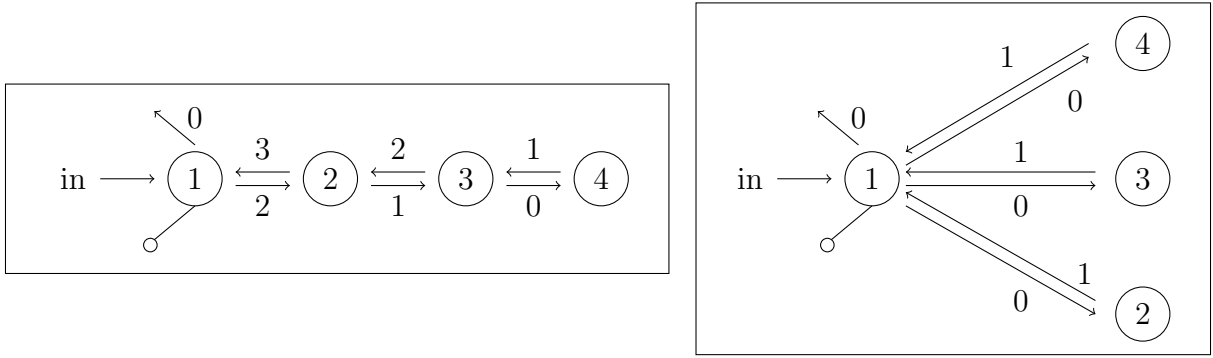
\begin{figure}[ht]
\begin{center}
	\begin{tikzpicture}[scale=1.8]
 	\draw (0,0) circle (0.2);	
 	\draw (1,0) circle (0.2);	
 	\draw (2,0) circle (0.2);	
 	\draw (3,0) circle (0.2);	
    	\node[] at (0, 0) {1};
    	\node[] at (1, 0) {$2$};
    	\node[] at (2, 0) {$3$};
    	\node[] at (3, 0) {$4$};
	 \draw[<-] (0.3, 0.05) -- (0.7, 0.05);	
	 \draw[->] (0.3, -0.05) -- (0.7, -0.05);	
	 \draw[<-] (1.3, 0.05) -- (1.7, 0.05);	
	 \draw[->] (1.3, -0.05) -- (1.7, -0.05);	
	 \draw[<-] (2.3, 0.05) -- (2.7, 0.05);	
	 \draw[->] (2.3, -0.05) -- (2.7, -0.05);	
   	 \node[] at (0.5, 0.22) {3}; 
   	 \node[] at (1.5, 0.22) {2}; 
   	 \node[] at (2.5, 0.22) {1}; 
   	 \node[] at (0.5, -0.2) {2}; 
   	 \node[] at (1.5, -0.2) {1}; 
   	 \node[] at (2.5, -0.2) {0}; 
 	\draw (-0.33,-.49) circle (0.05);	
	 \draw[-] (0, -.2 ) -- (-0.3, -.45);	
	 \draw[->] (-0.7, 0) -- (-0.3, 0);	
   	 \node[] at (-.9, 0) {in};
	 \draw[->] (0, .25) -- (-0.3, .5);	
   	 \node[] at (0, 0.45){0}; 
\draw (-1.4,-.7) rectangle (3.5, .7);
%
 	\draw (5,0) circle (0.2);
 	\draw (7,-1) circle (0.2);
 	\draw (7, 0) circle (0.2);
 	\draw (7,1) circle (0.2);
    	\node[] at (5, 0) {1};
    	\node[] at (7, -1) {2};
    	\node[] at (7, 0) {3};
    	\node[] at (7, 1) {$4$};
	 \draw[->] (5.25, -.2) -- (6.65, -1);
	 \draw[<-] (5.25, -.1) -- (6.65, -0.9);
	 \draw[->] (5.25, -0.05) -- (6.65, -0.05);
	 \draw[<-] (5.25, 0.05) -- (6.65, 0.05);
	 \draw[->] (5.25, 0.1) -- (6.6, 0.9);
	 \draw[<-] (5.25, 0.2) -- (6.6, 1);
   	 \node[] at (6.1, -0.9) {0}; 
   	 \node[] at (6.57, -0.7){1}; 
   	 \node[] at (6.1, 0.2){1}; 
   	 \node[] at (6.1, -0.2){0}; 
   	 \node[] at (6, 0.9){1}; 
   	 \node[] at (6.5, 0.58){0}; 
 	\draw (4.67,-.49) circle (0.05);	
	 \draw[-] (5, -.2 ) -- (4.7, -.45);	
	 \draw[->] (4.3, 0) -- (4.7, 0);	
   	 \node[] at (4.1, 0) {in};
	 \draw[->] (4.9, .25) -- (4.6, .5);	
   	 \node[] at (4.9, 0.45) {0}; 
\draw (3.7,-1.3) rectangle (7.5, 1.3);
	\end{tikzpicture}
\end{center}
\caption{A catenary and a mammillary model, with edges $(i,j)$ labeled by the multiplicity of $a_{ji}$ in the  corresponding singular-locus equation.}
\label{fig:multiplicity}
\end{figure}

Now consider the model in Figure~\ref{fig:multiplicity-2}, which also has edges labeled by multiplicities.
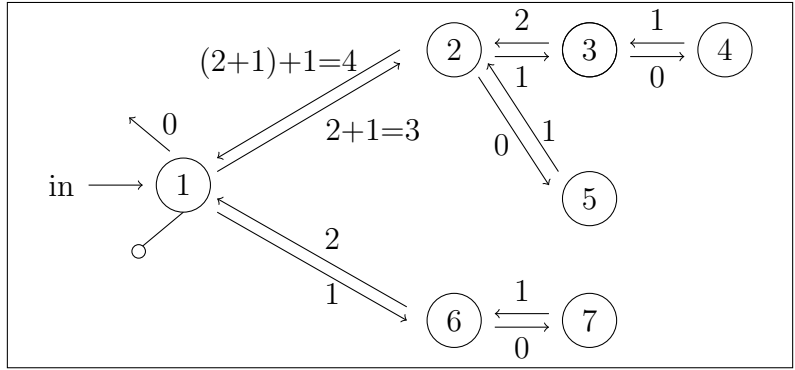
\begin{figure}[ht]
\begin{center}
	\begin{tikzpicture}[scale=1.8]
 	\draw (3,0) circle (0.2);
 	\draw (5,1) circle (0.2);
 	\draw (6,1) circle (0.2);
 	\draw (5,-1) circle (0.2);
 	\draw (6,-1) circle (0.2);
 	\draw (6,-0.1) circle (0.2);
 	\draw (6,1) circle (0.2);
 	\draw (7,1) circle (0.2);
	 \draw[->] (3.25, -.2) -- (4.65, -1);
	 \draw[<-] (3.25, -.1) -- (4.65, -0.9);
	 \draw[->] (3.25, 0.1) -- (4.6, 0.9);
	 \draw[<-] (3.25, 0.2) -- (4.6, 1);
	 \draw[<-] (5.25, .90) -- (5.77, 0.1);	
	 \draw[->] (5.18, .80) -- (5.70, 0);	
	 \draw[<-] (5.3, 1.05) -- (5.7, 1.05);	
	 \draw[->] (5.3, 0.95) -- (5.7, 0.95);	
	 \draw[->] (6.3, 0.95) -- (6.7, 0.95);	
	 \draw[<-] (6.3, 1.05) -- (6.7, 1.05);
	 \draw[<-] (5.3, -0.95) -- (5.7, -0.95);	
	 \draw[->] (5.3, -1.05) -- (5.7, -1.05);	
    	\node[] at (3, 0) {1};
    	\node[] at (5, 1) {2};
    	\node[] at (6, 1) {3};
    	\node[] at (7, 1) {4};
    	\node[] at (6, -0.1) {5};
    	\node[] at (5, -1) {6};
    	\node[] at (6, -1) {7};
   	 \node[] at (4.1, -0.8) {1}; 
   	 \node[] at (4.1, -0.4){2}; 
   	 \node[] at (5.5, -0.78) {1}; 
   	 \node[] at (5.5, -1.2) {0}; 
   	 \node[] at (3.7, 0.9){(2+1)+1=4}; 
   	 \node[] at (4.4, 0.4){2+1=3}; 
   	 \node[] at (5.5, 1.22) {2}; 
   	 \node[] at (5.5, 0.8) {1}; 
   	 \node[] at (6.5, 1.22) {1}; 
   	 \node[] at (6.5, 0.8) {0}; 
   	 \node[] at (5.7, 0.4) {1}; 
   	 \node[] at (5.35, 0.3) {0}; 
\draw (1.7,-1.35) rectangle (7.5, 1.35);
 	\draw (2.67,-.49) circle (0.05);	
	 \draw[-] (3, -.2 ) -- (2.7, -.45);	
	 \draw[->] (2.3, 0) -- (2.7, 0);	
   	 \node[] at (2.1, 0) {in};
	 \draw[->] (2.9, .25) -- (2.6, .5);	
   	 \node[] at (2.9, 0.45){0}; 
	\end{tikzpicture}
\end{center}
\caption{A linear compartmental model, with edges $(i,j)$ labeled by the multiplicity of $a_{ji}$ in the corresponding singular-locus equation.}
\label{fig:multiplicity-2}
\end{figure}

Notice that all leaf-edges
in Figures~\ref{fig:multiplicity} and~\ref{fig:multiplicity-2}
 have the following labels:
\begin{center}
	\begin{tikzpicture}[scale=1.6]
 	\draw (0,0) circle (0.2);	
 	\draw (1,0) circle (0.2);	
	 \draw[<-] (0.3, 0.05) -- (0.7, 0.05);	
	 \draw[->] (0.3, -0.05) -- (0.7, -0.05);	
   	 \node[] at (0.5, 0.22) {1}; 
   	 \node[] at (0.5, -0.2) {0}; 
	\end{tikzpicture}
\end{center}
Also, as edges move one step closer to compartment 1, the corresponding edge labels increase by 1, 
except at compartment 2 in Figure~\ref{fig:multiplicity-2}.
Incident to that compartment are the edges (2,1) and (1,2), whose labels are written as sums, $(2+1)+1=4$ and $2+1=3$, respectively.
These observations suggest the following procedure to predict multiplicities:  
\vskip .1in
\noindent
{\sc (Conjectured) Procedure to obtain exponents in singular-locus equation} 

\noindent
{\bf Input:} A linear compartmental model $\mathcal{M}=(G, In, Out, Leak)$ with 
input, output, and leak in compartment 1 only
($In=Out=Leak=\{1\}$), and such that $G$ is a bidirectional tree.

\noindent
{\bf Output:} One integer associated to each edge $(i,j)$ of $G$ (which is the purported multiplicity of $a_{ji}$ in the singular-locus equation of $\mathcal{M}$).
 
\noindent
{\bf Steps:} 
\begin{itemize}
	\item {\em Part 1: outgoing edges} (directed away from compartment 1)
		\begin{enumerate}
		\item Label each outgoing leaf-edge with 0.
		\item As long as there are unlabeled outgoing edges, consider an outgoing edge $(i,j)$ such that all outgoing edges of the form $j \to \star$ have already been labeled.  Add 1 to each of these labels, and then compute their sum $S$.  
		Label edge $(i,j)$ with $S$.
		\end{enumerate}
	\item {\em Part 2: incoming edges} (directed toward compartment 1)
		\begin{enumerate}
		\item Label each incoming leaf-edge with 1.
		\item As long as there are unlabeled incoming edges, consider an incoming edge 
		$(j,i)$ such that all incoming edges of the form $\star \to j$ have already been labeled.  Label the edge $(j,i)$ with 1 plus the sum of the labels of all edges incoming to $j$.
		\end{enumerate}
\end{itemize}

The above procedure and the following conjecture are due to Molly Hoch, Mark Sweeney, and Hwai-Ray Tung (personal communication).

\begin{conjecture}[Tree conjecture] \label{conj:tree}
The procedure above yields the multiplicities of parameter variables $a_{ji}$ in the equation of the singular locus.
\end{conjecture}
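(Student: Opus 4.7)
The plan is first to recast the procedure in closed form. A straightforward induction on the tree shows that, for each non-root vertex $v$ with parent $p(v)$, the procedure assigns multiplicity $s(v) := |\mathrm{subtree}(v)|$ to the incoming rate $a_{p(v),v}$ (labeling the edge $v \to p(v)$) and multiplicity $s(v)-1$ to the outgoing rate $a_{v,p(v)}$ (labeling the edge $p(v)\to v$). Hence Conjecture~\ref{conj:tree} is equivalent to asserting that
\[
f \;=\; \Bigl(\prod_{v \neq 1} a_{p(v),v}^{\,s(v)}\, a_{v,p(v)}^{\,s(v)-1}\Bigr)\cdot h,
\]
where $f$ is the singular-locus equation and $h$ is a polynomial coprime to every $a_{ji}$. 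This is consistent with Theorem~\ref{thm:mam} (star tree), whose monomial part is $\prod_{v} a_{1v}$, and with the monomial equation in Conjecture~\ref{conj:cat} (path tree).

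\textbf{Approach.} I would proceed by induction on $|V(T)|$, pruning a leaf $\ell$ of $T$ (attached to some vertex $v$) at each inductive step. Let $a_{v\ell}$ denote the outgoing rate on the edge $v \to \ell$ (predicted multiplicity $s(\ell)-1 = 0$) and $a_{\ell v}$ the incoming rate on the edge $\ell \to v$ (predicted multiplicity $s(\ell) = 1$). By Theorem~\ref{thm:coeff-i-o-general}, each coefficient of the input-output equation is a sum of edge-label products over spanning incoming forests, and any such forest uses at most one of the two edges incident to $\ell$. Combined with Remark~\ref{rmk:jac-mat}, this should permit explicit row and column operations on the Jacobian to isolate the two ``leaf'' columns, factor out a single $a_{\ell v}$, and reduce (up to a controlled correction) to the $(2n-3)\times(2n-3)$ Jacobian of the coefficient map for the pruned tree $T' := T \setminus \{\ell\}$. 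Iterating this pruning from leaves upward, and invoking the block structures in the proofs of Theorems~\ref{thm:mam} and~\ref{thm:cycle} at branching configurations, should recover the full monomial $\prod_{v \neq 1} a_{p(v),v}^{\,s(v)} a_{v,p(v)}^{\,s(v)-1}$.

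\textbf{Main obstacle.} The principal difficulty arises at \emph{branching} vertices $v$ (those with more than one child), where---already visible in Theorem~\ref{thm:mam}---the Jacobian contains a Vandermonde-type submatrix indexed by $v$'s children. Pruning a leaf at a descendant of $v$ perturbs the elementary symmetric polynomials populating this block, and tracking how this perturbation feeds into the multiplicities at the parent edges of $v$ is the main technical challenge, since the multiplicity at an edge incident to $v$ can only be read off after aggregating contributions from all of $v$'s subtrees. A secondary difficulty is the \emph{exactness} part: once divisibility by the predicted monomial is established, ruling out higher powers of any single $a_{ji}$ requires evaluating at generic remaining parameters and verifying that the non-monomial factor $h$---conjecturally a product of Vandermonde-like determinants, one per branching vertex---is coprime to each rate variable, which should follow from the explicit form of $h$ but must be checked.
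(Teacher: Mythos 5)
The statement you are addressing is a \emph{conjecture}: the paper offers no proof of it, only the remark that Hoch, Sweeney, and Tung verified it for trees on up to four nodes, and the authors explicitly state (for the special case of the catenary model, Conjecture~\ref{conj:cat}) that they ``do not know how to complete the inductive step'' of precisely the kind of induction you propose. So there is no paper proof to compare against, and your submission is not a proof either --- it is a plan. Your one genuine contribution is the closed-form reformulation of the procedure: the outgoing edge into $v$ receives multiplicity $s(v)-1$ and the incoming edge out of $v$ receives $s(v)$, where $s(v)=|\mathrm{subtree}(v)|$. That identity is correct (an easy induction, consistent with Theorem~\ref{thm:mam}, Conjecture~\ref{conj:cat}, and Figure~\ref{fig:multiplicity-2}), and it is a cleaner statement of the conjecture. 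But everything after that is conditional: the pruning step is never carried out, and your own ``main obstacle'' paragraph concedes the two points where all the work lies.

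Two concrete reasons the sketch, as stated, cannot close. First, the pruning step is miscalibrated: if you delete a leaf $\ell$, then $s(u)$ drops by one for \emph{every} ancestor $u\neq 1$ of $\ell$, so the ratio of the predicted monomial for $T$ to that for $T'$ is not a single factor of the incoming leaf rate but the product of that rate with $a_{p(u),u}\,a_{u,p(u)}$ over all proper ancestors $u\neq 1$ of $\ell$. ``Factor out a single $a_{\ell v}$ and reduce to the pruned Jacobian'' therefore cannot be the right reduction; the entire root-path monomial (and the change in the non-monomial factor $h$) must be accounted for in one step, which is exactly the bookkeeping nobody has managed --- note that even for the path, the paper proves only first-power divisibility by $a_{21}$ and by $a_{12},a_{23},\dots,a_{n-1,n}$ (Proposition~\ref{prop:cat}), nowhere near the claimed exponents $n-1, n-2,\dots$. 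Second, establishing the exact multiplicity (not just divisibility) requires an upper bound, i.e., showing the quotient $h$ is coprime to each $a_{ji}$; you flag this but give no mechanism for it, and no candidate form of $h$ at branching vertices beyond ``Vandermonde-like.'' There is also a prior issue you skip entirely: for a general tree one must first exhibit a choice of $2n-1$ coefficients whose square Jacobian detects the full singular locus (Definition~\ref{def:eq-sing-locus}(2)), which the paper only does case by case. As it stands, your proposal restates the conjecture in a sharper form and correctly identifies why it is hard; it does not prove it.
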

\noindent
Hoch, Sweeney, and Tung verified that the conjecture holds for trees on up to 4 nodes.

\section{Identifiability degree: mammillary, catenary, and cycle models} \label{sec:iddegree}

In this section, we discuss the identifiability degrees of mammillary, catenary, and cycle models (Proposition~\ref{prop:prior-ident-degree} and Theorem~\ref{thm:ident-degree-cycle}). 
The identifiability degree, a term we introduce here, is $m$ if exactly $m$ sets of parameter values map to a generic input-output data vector:

\begin{definition} \label{def:id-deg}
The {\em identifiability degree} of a (generically locally identifiable) model is $m$ if the coefficient map is generically $m$-to-1.
\end{definition}
\noindent
In other words, the identifiability degree is the number of elements in the fiber of the coefficient map over a generic point.

Cobelli, Lepschy, and Romanin Jacur~\cite{cobelli-constrained-1979} showed that 
the identifiability degrees of mammillary and catenary models are, respectively, $(n-1)!$ and 1:
\begin{proposition}[Mammillary and catenary~\cite{cobelli-constrained-1979}] \label{prop:prior-ident-degree}
Assume $n \geq 2$.
\begin{enumerate}

\item The identifiability degree of the 
mammillary (star) model in Figure~\ref{fig:cycle} is 
 $(n-1)!$, where $n$ is the number of compartments.

\item   The identifiability degree of the 
catenary (path) model in Figure~\ref{fig:cat} is 
$1$.  That is, the model is generically globally identifiable.

\end{enumerate}
\end{proposition}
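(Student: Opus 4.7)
The plan is to handle the two parts separately, reusing in part (1) the explicit coefficient formulas from the proof of Theorem~\ref{thm:mam}, and in part (2) exploiting the tridiagonal structure of the compartmental matrix $A$ through a continued-fraction expansion of $c(s)/d(s)$, where $c(s) := \det(sI - A)$ and $d(s) := \det(sI - A_{11})$.

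For the mammillary part, the key observation from the proof of Theorem~\ref{thm:mam} is that $d_i = E_{n-i-1}(a_{12}, \dots, a_{1n})$. Hence the map $(a_{12}, \dots, a_{1n}) \mapsto (d_0, \dots, d_{n-2})$ is exactly the map from $n-1$ variables to their elementary symmetric polynomials, which is generically $(n-1)!$-to-1 (corresponding to the $(n-1)!$ orderings of the $n-1$ distinct roots of a generic degree-$(n-1)$ polynomial). For each fixed such ordering, the remaining coordinates $(c_0, \dots, c_{n-1})$ are affine-linear in $(a_{01}, a_{21}, \dots, a_{n1})$ with Jacobian equal to the matrix $M$ from the proof of Theorem~\ref{thm:mam}; since $\det M$ is (up to sign) the Vandermonde polynomial in $a_{12}, \dots, a_{1n}$, and hence nonzero generically, each ordering yields a unique $(a_{01}, a_{21}, \dots, a_{n1})$. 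Multiplying $(n-1)!$ orderings by one solution per ordering gives identifiability degree $(n-1)!$.

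For the catenary part, expanding $c(s) = \det(sI - A)$ along the first row of the tridiagonal matrix $A$ yields the three-term recurrence
\[
c(s) \;=\; (s + a_{01} + a_{21})\, d(s) \;-\; a_{12}\, a_{21}\, d^{(2)}(s),
\]
where $d^{(2)}(s) := \det(sI - A_{\{1,2\}\{1,2\}})$; iterating the analogous identity along the chain of trailing principal submatrices of $A$ yields the continued-fraction expansion
\[
\frac{c(s)}{d(s)} \;=\; s + A_1 - \cfrac{B_1}{s + A_2 - \cfrac{B_2}{\ddots\; s + A_{n-1} - \cfrac{B_{n-1}}{s + A_n}}}
\]
with $A_1 = a_{01}+a_{21}$, $A_i = a_{i-1,i}+a_{i+1,i}$ for $2 \le i \le n-1$, $A_n = a_{n-1,n}$, and $B_i = a_{i,i+1}\, a_{i+1,i}$ for $1 \le i \le n-1$. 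By uniqueness of the continued-fraction expansion of a rational function, the $A_i$ and $B_i$ are determined by the pair $(c, d)$, and one then recovers the parameters from the innermost layer outward: $a_{n-1,n} = A_n$, then $a_{n,n-1} = B_{n-1}/A_n$, $a_{n-2,n-1} = A_{n-1} - a_{n,n-1}$, and so on, finishing with $a_{21} = B_1/a_{12}$ and $a_{01} = A_1 - a_{21}$. Hence the catenary has identifiability degree $1$.

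The main obstacle will be the rigorous justification of the continued-fraction form above: specifically, one must verify by induction on $n$, using the three-term recurrence and its analogues for the smaller trailing submatrices, that the iteration truly terminates at $s + A_n$ and that each numerator is exactly $B_i$ with each ``diagonal'' term equal to $s + A_i$. Tracking leading coefficients at every stage and identifying the genericity conditions (nonvanishing of each $a_{i,i+1}$ and $a_{i+1,i}$, so that no division by zero occurs in the recursive recovery) is where the most care is required.
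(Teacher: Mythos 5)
The paper does not actually prove this proposition --- it is imported from Cobelli, Lepschy, and Romanin Jacur~\cite{cobelli-constrained-1979} --- so there is no internal proof to compare against, and your argument must stand on its own. It does, and both halves are sound. For the mammillary part, your strategy (recover the multiset $\{a_{12},\dots,a_{1n}\}$ from the $d_i$'s via Vieta, then solve an affine-linear system for the remaining parameters for each of the $(n-1)!$ orderings) is exactly the strategy the paper itself deploys for the cycle model in Theorem~\ref{thm:ident-degree-cycle}, transplanted to the star; the only imprecision is that the relevant Jacobian in $(a_{01},a_{21},\dots,a_{n1})$ is the $n\times n$ block-triangular matrix with blocks $M$ and the $1\times1$ block $a_{12}\cdots a_{1n}$ (since $c_0=a_{01}a_{12}\cdots a_{1n}$ involves $a_{01}$), with determinant $\pm\, a_{12}\cdots a_{1n}\det M$, not $M$ itself --- still generically nonzero by equation~\eqref{eq:Van}, so the conclusion is unaffected. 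You should also say one sentence on why each of the $(n-1)!$ solutions is a genuine preimage: the $d_i$'s depend only on the multiset, so they are automatically satisfied, and the $(n-1)!$ points are distinct because the $a_{1j}$ are generically distinct. For the catenary part, the three-term recurrence and the resulting Jacobi continued fraction are correct ($\alpha_1=a_{01}+a_{21}$, $\alpha_i=a_{i-1,i}+a_{i+1,i}$, $\alpha_n=a_{n-1,n}$, $B_i=a_{i,i+1}a_{i+1,i}$), and the ``uniqueness of the expansion'' you invoke is most cleanly justified as the Euclidean algorithm: the quotient of $c$ by $d$ gives $s+\alpha_1$, the remainder is $-B_1D_3$ with $D_3$ monic of degree $n-2$, so $B_1$ is read off the leading coefficient, and one recurses on $(d,D_3)$; genericity ($a_{i,i+1}a_{i+1,i}\neq 0$) guarantees no degree drop and no division by zero in the back-substitution. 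This is essentially the classical transfer-function/continued-fraction argument of the cited reference, so your route coincides with the literature's rather than with anything in this paper.
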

\noindent
Cobelli, Lepschy, and Romanin Jacur also computed the identifiability degrees for versions of the
  mammillary and catenary models in which the input/output compartment need not be, respectively, the central compartment or an ``end'' compartment of the path~\cite{cobelli-constrained-1979}.  
 
Here we prove that the identifiability degree of a cycle model is $(n-1)!$.  
\begin{theorem}[Cycle] \label{thm:ident-degree-cycle}
Assume $n \geq 3$.
The identifiability degree of the 
cycle model in Figure~\ref{fig:cycle} is 
 $(n-1)!$, 
 where $n$ is the number of compartments.  
\end{theorem}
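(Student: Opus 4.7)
\medskip

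\noindent\textbf{Proof proposal.}
My plan is to read off the identifiability degree directly from the explicit coefficient formulas established inside the proof of Theorem~\ref{thm:cycle}. Recall from that proof that, writing $E_j$ for the $j$-th elementary symmetric polynomial in the $n-1$ ``cycle'' parameters $a_{32},a_{43},\dots,a_{n,n-1},a_{1n}$, the coefficients of the input-output equation are
\begin{align*}
c_0 &= a_{01}E_{n-1},\\
c_i &= (a_{01}+a_{21})E_{n-i-1}+(\text{a polynomial in the }E_j\text{'s alone}),\quad i=1,\dots,n-1,\\
d_i &= E_{n-i-1},\quad i=0,1,\dots,n-2.
\end{align*}
In particular, every coefficient is symmetric in the $n-1$ cycle parameters $a_{32},a_{43},\dots,a_{1n}$, and involves the leak/``first'' parameters only through $a_{01}$ itself (via $c_0$) and the sum $a_{01}+a_{21}$ (via the $c_i$ with $i\geq 1$).

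Given a generic point in the image of the coefficient map, I will recover the fiber as follows. First, from $d_0,d_1,\dots,d_{n-2}$ we read off the full list $E_1,E_2,\dots,E_{n-1}$ of elementary symmetric polynomials in the $n-1$ cycle parameters. Equivalently, the multiset $\{a_{32},a_{43},\dots,a_{1n}\}$ is uniquely determined as the multiset of roots of the univariate polynomial $x^{n-1}-E_1 x^{n-2}+\dots+(-1)^{n-1}E_{n-1}$. Second, since $d_0=E_{n-1}\neq 0$ generically, the identity $c_0=a_{01}d_0$ recovers $a_{01}$ uniquely. Third, the highest coefficient $c_{n-1}$ (together with the already-known $E_j$'s) recovers $a_{01}+a_{21}$, and hence $a_{21}$ uniquely. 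Thus $a_{01}$ and $a_{21}$ are globally identifiable, and the fiber is parametrized exactly by the assignments of the known multiset of $n-1$ values to the $n-1$ labeled positions $a_{32},a_{43},\dots,a_{1n}$.

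To conclude, I will argue that the fiber has exactly $(n-1)!$ elements for a generic coefficient vector. Symmetry of the coefficients in $a_{32},\dots,a_{1n}$ shows that every permutation of these $n-1$ values produces the same coefficient vector, so the fiber has size at least $(n-1)!$; the analysis above shows it has size at most $(n-1)!$. One must verify that on a Zariski-open subset of parameter space the $n-1$ cycle values $a_{32},\dots,a_{1n}$ are pairwise distinct (so that all $(n-1)!$ permutations give genuinely different preimages). This is automatic: the discriminant $\prod_{i<j}(a_{i+1,i}-a_{j+1,j})^2$ is a nonzero polynomial, so its nonvanishing defines a nonempty Zariski-open (hence full-measure) set of parameter values — note that this discriminant is precisely the square of the ``Vandermonde'' factor appearing in the singular-locus equation of Theorem~\ref{thm:cycle}, which is the main reason the analysis goes through cleanly.

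\emph{Expected main obstacle.} The algebraic content is light once the coefficient formulas are in hand; the only place where care is needed is the verification that the fiber is \emph{no larger} than $(n-1)!$. This reduces to the elementary fact that the map $(a_{32},\dots,a_{1n})\mapsto (E_1,\dots,E_{n-1})$ is generically $(n-1)!$-to-$1$ on $\R^{n-1}$, which follows from the fundamental theorem of symmetric polynomials applied to the monic polynomial with roots $a_{32},\dots,a_{1n}$. No further computations are required.
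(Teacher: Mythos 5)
Your proposal is correct and follows essentially the same route as the paper's own proof: read off the multiset $\{a_{32},\dots,a_{1n}\}$ as the roots of the monic polynomial determined by the $d_i$'s, recover $a_{01}$ and $a_{21}$ uniquely from $c_0=a_{01}d_0$ and the leading relation, and use the symmetry of the coefficients in the cycle parameters for the lower bound. The only (welcome, but inessential) addition is your explicit remark that generic distinctness of the cycle parameters is guaranteed by the nonvanishing of the Vandermonde discriminant, where the paper simply invokes genericity.
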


\begin{proof} 
Let $E_{j}:=E (a_{32},a_{43}, \dots,a_{1,n} )$ be the $j$-th elementary symmetric polynomial on the parameters
$a_{32}, a_{43}, \dots,a_{1,n}$.

Recall from the proof of Theorem~\ref{thm:cycle}, specifically, equations~\eqref{eq:coef-cycle-1}--\eqref{eq:coef-cycle-2}, that 
the coefficients on the left-hand side of the input-output equation are 
	$c_0 = a_{01} E_{n-1}$   and 
	$c_i = \left(a_{01}+a_{21}\right) E_{n-i-1}  +  E_{n-i-2}$
			  (for  $i = 1,2,\dots, n-1$), and those on the right-hand side are
	$d_i =  E_{n-i-1}$ (for $i=0,1,\dots, n-2$).
These coefficients  $c_i$ and $d_i$ are invariant under permutations of the $a_{32}, a_{43}, \dots,a_{1,n}$, so the identifiability degree is at least $(n-1)!$.	
	
We also see that the coefficients 
are related by the equation $c_0 = a_{01}d_0$ and as follows:
\begin{align*}
c_1 &= (a_{01}+a_{21})d_1 + d_0 \\
c_2 &= (a_{01}+a_{21})d_2 + d_1 \\
& ~~ \vdots \\
c_{n-1}&=(a_{01}+a_{21})d_{n-1}+d_{n-2}~.
\end{align*}
%
Thus, $a_{01}=c_0/d_0$ and 
$a_{21}=(c_{n-1}-d_{n-2})/d_{n-1} - c_0/d_0$, so both $a_{01}$ and $a_{21}$ can be uniquely recovered (when the parameters $a_{ij}$ are generic).  

Now consider the remaining coefficients $a_{32}, a_{43}, \dots,a_{1,n}$.  We may assume, by genericity, that these $a_{ij}$'s are distinct.
Having proven that the 
identifiability degree is {\em at least} $(n-1)!$, we need only show that the set 
$\mathcal{A}:=\{ a_{32}, a_{43}, \dots,a_{1,n} \}$ (of size $n-1$) can be recovered from the coefficients $c_i$ and $d_i$ 
(because this would imply that the identifiability degree is {\em at most} $(n-1)!$).
To see this, first recall that the $d_i$'s comprise all the elementary symmetric polynomials, from $E_1$ to $E_{n}$, on 
$a_{32}, a_{43}, \dots,a_{1,n}$, and these $E_i$'s are, up to sign, the coefficients of the following (monic) univariate polynomial:
	\[
	\prod_{i=2}^n (x-a_{i+1,i})~.
	\]
In turn, a monic polynomial in $\mathbb{R}[x]$
is uniquely determined by its set of roots (in $\mathbb{C}$),
so the set $\mathcal{A}$ is uniquely determined by the $d_i$'s.  This completes the proof.
%
\end{proof}

The proof of Theorem~\ref{thm:ident-degree-cycle} showed that for the cycle model, under generic conditions, the parameters $a_{01}$ and $a_{21}$ can be uniquely recovered from input-output data, but only the {\em set} of the remaining parameters $\{ a_{32}, a_{43}, \dots,a_{1,n} \}$ can be identified.  
Also, this set does {\em not} reflect some underlying symmetry in the cycle model in Figure~\ref{fig:cycle} (the symmetry of the cycle is broken when one compartment is chosen for the input/output/leak).  
In contrast, the identifiability degree of the mammillary model, which is $(n-1)!$ (Proposition~\ref{prop:prior-ident-degree}), {\em does} reflect 
the symmetry of its graph: the $n-1$ non-central compartments can be permuted (see Figure~\ref{fig:cycle}).

\section{Discussion} \label{sec:discussion}
In this work, we investigated, for linear compartmental models, the set of parameter values that are non-identifiable.  Specifically, we focused on examples where a single determinantal equation, the singular-locus equation, defines the set.  We showed first that this equation gives information about which submodels are identifiable, and then computed this equation for cycle and mammillary (star) models.  These equations revealed that when the parameters are known to be positive, then these parameters can be recovered from input-output data, as long as certain pairs of parameters are not equal.  We also stated an in-depth conjecture (Conjecture \ref{conj:tree}) regarding the singular-locus equation for tree models.  While we focused on three specific families, it would be interesting to explore additional families, especially families where the singular locus is defined by a single equation, which brings us back to a question discussed in Section \ref{sec:background}:

\begin{question}
When is the locus of non-identifiable parameter values codimension one?
\end{question}

Another topic we examined is the identifiability degree, the number of parameter sets that map to a generic input-output data vector.  We computed this degree for cycle models, and noted that the degree was already proven for catenary (path) and mammillary models~\cite{cobelli-constrained-1979}.

A natural future problem is to investigate how our results change when the input, output, and/or leak are moved to other compartments.  As mentioned earlier, results in this direction  for the identifiability degree were obtained by Cobelli, Lepschy, and Romanin Jacur for catenary and mammillary models~\cite{cobelli-constrained-1979}. 
We also are interested in the effect of adding more inputs, outputs, or leaks. 
Some results in this direction, which pertain to identifiability, are given in~\cite{Gerberding-Obatake-Shiu,linear-i-o}.

Finally, in the course of our investigation of cycle models, we were able to add to the list of models that we can conclude  are identifiable simply from inspecting the underlying graph.  Restricting our attention to models in which a single compartment has an input, output, and leak (and no other compartment has an input, output, or leak) and adding our results from this paper, the only such models that are known to be identifiable by inspecting their underlying graph are:
\begin{enumerate}
	\item models in which the underlying graph is inductively strongly connected~\cite[Theorem~1]{MeshkatSullivantEisenberg}, 
	such as tree models 
	(e.g., catenary and mammillary models)~\cite{cobelli-constrained-1979}, and 
	\item cycle models (Theorem~\ref{thm:cycle}). 
\end{enumerate}
A final direction worth exploring is whether one can add to this list using the combinatorial interpretation of the input-output coefficients we gave in Section \ref{sec:jac} (Theorem~\ref{thm:coeff-i-o-general}).  
In fact, this avenue has already proven fruitful, as a recent extension of Theorem~\ref{thm:coeff-i-o-general} allowed Bortner {\em et al.} to completely characterize identifiable tree models (e.g., catenary and mammillary models) with one input, one output, and any number of leaks~\cite{BGMSS}.

\subsection*{Acknowledgements}
This project began at a SQuaRE (Structured Quartet Research Ensemble) at AIM, and the authors thank AIM for providing financial support and an excellent working environment. The authors thank 
Luis Garc\'{\i}a Puente and 
Heather Harrington for their insights and feedback throughout the course of this project, and acknowledge a referee for helpful comments. EG was supported by the NSF (DMS-1620109). NM was partially supported by the Clare Boothe Luce Program from the Henry Luce Foundation and by the NSF (DMS-1853525).  AS was supported by the NSF (DMS-1312473/1513364). 
Molly Hoch, Mark Sweeney, and Hwai-Ray Tung posed  
Conjecture~\ref{conj:tree} as a result of research conducted in the NSF-funded REU in the Department of Mathematics at Texas A\&M University (DMS-1460766), in which AS served as mentor.  

\appendix

\section{Proof of Proposition~\ref{prop:coeff}} \label{sec:appendix}
Here we prove Proposition~\ref{prop:coeff}, which, for convenience, we restate here:
\begin{proposition}[Proposition \ref{prop:coeff}]
For a linear compartmental model with $n$ compartments, let $A$ be the compartmental matrix.
Write the characteristic polynomial of $A$ as:
	\begin{align*}
	\det (\lambda I - A) ~=~ \lambda^n + e_{n-1} \lambda^{n-1} + \cdots + e_0~.
	\end{align*}
Then $e_i$ (for $i=0,1,\dots,n-1$) is the sum -- over $(n-i)$-edge, spanning, incoming forests of
the model's leak-augmented graph  
 $\widetilde G$ -- of the product of the edge labels in the forest:
	\begin{align} \label{eq:coeff-forest}
			e_i ~&=~
					\sum_{F \in \mathcal{F}_{n-i}( \widetilde G)} \pi_F   ~.
	\end{align}
\end{proposition}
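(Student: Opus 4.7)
The plan is to recognize $\lambda I - A$ as a reduced Laplacian and apply the weighted directed Matrix-Tree Theorem to an augmented version of $\widetilde G$.

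First I would form the augmented graph $\widetilde G^{\lambda}$, obtained from $\widetilde G$ by adding, for each $i \in \{1,\dots,n\}$, a new edge $i \to 0$ with weight $\lambda$ (running in parallel to any existing leak edge). A quick check against the definition of the compartmental matrix shows that $(\lambda I - A)_{ii}$ equals the total weight of all edges leaving $i$ in $\widetilde G^{\lambda}$, and for $i \ne j$ the off-diagonal entry $(\lambda I - A)_{ij} = -a_{ij}$ is the negated weight of the edge $j \to i$ in $\widetilde G^{\lambda}$. Hence $\lambda I - A$ is exactly the reduced (out-degree) Laplacian of $\widetilde G^{\lambda}$ obtained by removing row and column $0$ (the vertex $0$ has no outgoing edges in $\widetilde G^{\lambda}$).

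Next I would invoke the weighted digraph Matrix-Tree Theorem (Tutte's/Chaiken's), which asserts that this reduced-Laplacian determinant equals the sum over spanning in-trees $T$ of $\widetilde G^{\lambda}$ rooted at $0$ of the product $\prod_{e \in T} w(e)$. Every such in-tree has exactly $n$ edges, and its non-zero vertices each have exactly one outgoing edge directed toward $0$.

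The key combinatorial step is a bijection. Given an in-tree $T$ of $\widetilde G^{\lambda}$ rooted at $0$, split its edge set as $T = F \sqcup \Lambda$, where $\Lambda$ collects the $\lambda$-weighted edges and $F$ collects the remaining edges (which lie in $\widetilde G$). Then $F$ is a spanning incoming forest of $\widetilde G$: each vertex has at most one outgoing edge (since the corresponding vertex has exactly one outgoing edge in $T$, which may or may not be a $\lambda$-edge), and the underlying undirected graph is acyclic (as a subgraph of the tree underlying $T$). Conversely, given any $F \in \mathcal F_k(\widetilde G)$, the set $F$ partitions $\{0,1,\dots,n\}$ into $n+1-k$ components; because $0$ has no outgoing edges in $\widetilde G$, vertex $0$ is always the root of its own component, leaving exactly $n-k$ non-zero roots. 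Attaching to each non-zero root $i$ the $\lambda$-edge $i \to 0$ produces a unique in-tree $T$ of $\widetilde G^{\lambda}$ rooted at $0$ whose original-edge part is $F$. This pair $(F,\Lambda_F)$ contributes weight $\pi_F \cdot \lambda^{n-k}$.

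Summing over all in-trees and reindexing with $i = n - k$ gives
\[
\det(\lambda I - A) \;=\; \sum_{k=0}^{n} \lambda^{n-k} \sum_{F \in \mathcal F_k(\widetilde G)} \pi_F \;=\; \sum_{i=0}^{n} \lambda^{i} \sum_{F \in \mathcal F_{n-i}(\widetilde G)} \pi_F,
\]
from which identifying coefficients yields the formula for $e_i$. The main potential obstacle is keeping the Matrix-Tree conventions straight (out-degree Laplacian versus in-degree Laplacian, and in-trees versus out-trees directed toward versus away from the root); one sanity check is to verify the statement directly for $n=1$ and $n=2$, as is done implicitly in Example~\ref{ex:prop-MSE}. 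Once the conventions are pinned down, the bijection is routine and the formula follows immediately.
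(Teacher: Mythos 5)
Your proof is correct, but it takes a genuinely different route from the paper. The paper proves Proposition~\ref{prop:coeff} by strong induction on the pair (number of compartments, number of leaks): the leakless base case is handled by quoting Buslov's forest expansion for Laplacian characteristic polynomials (Proposition~\ref{prop:buslov}), and the inductive step splits the $(1,1)$ entry of $\lambda I - A$ as $(\lambda + \sum_k a_{k1}) + a_{01}$ and uses multilinearity of the determinant to write $\det(\lambda I - A) = \det(\lambda I - N) + a_{01}\det(\lambda I - M)$, where $N$ drops the leak at compartment $1$ and $M$ is the compartmental matrix attached to $\widetilde G_1$ (via Lemma~\ref{lem:remove-1}); the two summands then account for forests avoiding and containing the edge labeled $a_{01}$. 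You instead give a direct, induction-free argument: augment $\widetilde G$ with $\lambda$-weighted edges $i \to 0$, identify $\lambda I - A$ (up to transpose, which is harmless for the determinant) with the reduced Laplacian at vertex $0$, invoke the weighted directed matrix-tree theorem, and biject spanning in-trees rooted at $0$ with pairs consisting of a spanning incoming forest $F$ of $\widetilde G$ and the forced set of $\lambda$-edges attached to the non-zero roots of $F$. Your bijection is sound (including the multigraph subtlety when a leak edge $i \to 0$ runs parallel to a $\lambda$-edge), and the counting $\pi_F\lambda^{n-k}$ with $k = |E(F)|$ gives exactly the claimed coefficients. In effect you carry out directly the connection that the paper only gestures at in Remark~\ref{rmk:connection-to-laplacian-results}; what you buy is a one-shot combinatorial explanation of all coefficients simultaneously, at the cost of importing the Tutte/Chaiken theorem in its weighted multidigraph form (with the out-degree/in-tree convention pinned down), whereas the paper's induction only needs Buslov's statement for genuine Laplacians plus elementary determinant manipulations.
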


To prove Proposition~\ref{prop:coeff}, we need a closely related result, Proposition~\ref{prop:buslov} below, which is 
due to Buslov~\cite[Theorem 2]{buslov}.  For that result, recall that the {\em Laplacian matrix} of a graph $G$ with $n$ vertices and edges $i \to j$ labeled by $b_{ji}$ is the $(n \times n)$-matrix $L$ with entries as follows: 
\begin{align*}
L_{ij} ~:=~
 \begin{cases} 
	-b_{ji} 					& \text{if } i \neq j\\
	\sum_{k \neq i} b_{ki} 		& \text{if } i = j~. \\
   \end{cases}
\end{align*}
For a model $\mathcal{M}=(G,In,Out,Leak)$ with no leaks ($Leak=\emptyset$), the Laplacian matrix of $G$ is the transpose of the negative of the
compartmental matrix of $\mathcal{M}$.
\begin{proposition}[Buslov] \label{prop:buslov}
Let $L$ be the Laplacian matrix of a directed graph $G$ with $n$ edges. 
Write the characteristic polynomial of $L$ as:
	\begin{align*}
	\det (\lambda I - L) ~=~ \lambda^n + e_{n-1} \lambda^{n-1} + \cdots + e_0~.
	\end{align*}
Then
	\begin{align*}
			e_i ~&=~ (-1)^{n-i}  \sum_{F \in \mathcal{F}_{n-i}( G)} \pi_F   \quad \quad  \text{ for $i=0,1,\dots,n-1$. }
	\end{align*}
In particular, $e_0=0$.
\end{proposition}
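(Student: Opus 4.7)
The plan is to express each coefficient $e_i$ as a signed sum of principal minors of $L$ and then apply the matrix-tree theorem for weighted directed graphs to each minor. For the first step, a standard identity for the characteristic polynomial gives
\[
e_i ~=~ (-1)^{n-i} \sum_{\substack{S \subseteq \{1,\dots,n\} \\ |S| = n-i}} \det L_S,
\]
where $L_S$ denotes the principal submatrix of $L$ indexed by the rows and columns in $S$.

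For the second step I would invoke the all-minors matrix-tree theorem for weighted directed graphs (the multi-root generalization of Tutte's theorem, due to Chaiken and Kelmans--Chelnokov). With the sign convention used here ($L_{ii}$ equal to the total weight of edges leaving $i$, and $L_{ij} = -b_{ji}$ for $i \ne j$), the theorem states that for any $S$ with complement $T = \{1,\dots,n\} \setminus S$, the principal minor $\det L_S$ equals $\sum_F \pi_F$, summed over spanning subgraphs $F$ of $G$ in which every vertex of $S$ has exactly one outgoing edge, every vertex of $T$ has no outgoing edge, and following outgoing edges from any $s \in S$ eventually lands in $T$. Such an $F$ has $|S| = n-i$ edges, at most one outgoing edge at every vertex, and no undirected cycles, so it is precisely a spanning incoming forest in the sense of Definition~\ref{def:forest}, with $T$ equal to its (uniquely determined) set of out-degree-zero vertices. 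Summing over all $S$ of size $n-i$ is therefore the same as summing over all $F \in \mathcal{F}_{n-i}(G)$, each counted exactly once at $S = \{1,\dots,n\} \setminus T(F)$, and combining with the display above yields the claimed formula.

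Finally, $e_0 = 0$ follows in two equivalent ways: combinatorially, an $n$-edge spanning incoming forest on $n$ vertices would force every vertex to have out-degree exactly $1$, producing a functional digraph that necessarily contains a directed cycle --- contradicting acyclicity --- so $\mathcal{F}_n(G) = \emptyset$; linearly, the column sums of $L$ vanish by construction, so the all-ones row vector lies in the left kernel of $L$ and hence $\det L = 0$, giving $e_0 = (-1)^n \det L = 0$. The main obstacle is the all-minors matrix-tree theorem itself: the classical rooted version (single row and column removed, one root) is the well-known Kirchhoff/Tutte statement, but the multi-root version with a clean, sign-free combinatorial description is more delicate. A self-contained proof could proceed either by induction on $n$ after a suitable pivot operation on $L$, or by applying the Cauchy--Binet formula after factoring $L$ through an oriented incidence-like matrix; alternatively, one can cite Buslov's Theorem~2 directly.
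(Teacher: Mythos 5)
Your proposal is correct in substance, but note that the paper does not prove this proposition at all: it is attributed to Buslov and cited as \cite[Theorem 2]{buslov}, with Remark~\ref{rmk:buslov} only translating between Buslov's indexing (forests with $i$ connected components) and the edge-count indexing used here. Your argument therefore supplies what the paper leaves as a citation, and it does so along exactly the lines hinted at in Remark~\ref{rmk:connection-to-laplacian-results}: expand the characteristic polynomial into signed sums of principal minors, $e_i = (-1)^{n-i}\sum_{|S|=n-i}\det L_S$, and evaluate each principal minor by the all-minors matrix-tree theorem. Your bookkeeping is right: a subgraph counted in $\det L_S$ has out-degree one at each vertex of $S$, out-degree zero on the complement $T$, and no cycles (a directed cycle would be trapped in $S$ and never reach $T$, and with out-degree at most one everywhere any undirected cycle is automatically directed), so it is a spanning incoming forest with $|S|=n-i$ edges in the sense of Definition~\ref{def:forest}; conversely each $F \in \mathcal{F}_{n-i}(G)$ occurs for exactly one $S$, namely its set of non-root vertices, since following out-edges in an acyclic subgraph necessarily terminates in $T$. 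What your route buys is a derivation from a better-known result (Chaiken's theorem, which the paper itself cites) rather than a bare appeal to Buslov; what it costs is that the all-minors theorem is still a black box, and your closing suggestion that one could ``cite Buslov's Theorem~2 directly'' is circular in this context, since that theorem \emph{is} the statement being proved. One small slip: with the convention used here ($L_{ii} = \sum_{k\neq i} b_{ki}$ and $L_{ij} = -b_{ji}$ for $i \neq j$), it is the \emph{row} sums of $L$ that vanish, not the column sums, so the all-ones vector lies in the right kernel rather than the left; the conclusion $\det L = 0$, hence $e_0 = 0$, is unaffected, and your alternative combinatorial argument (an $n$-edge spanning incoming forest on $n$ vertices would force every vertex to have out-degree one and hence produce a directed cycle) is also correct.
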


\begin{remark} \label{rmk:buslov}
Buslov's statement of Proposition~\ref{prop:buslov} (namely, \cite[Theorem 2]{buslov}) differs slightly from ours: it refers to forests with $i$ connected components rather than $n-i$ edges.  That version is equivalent to ours, because a spanning, incoming forest of a graph $G$ with $n$ vertices has $i$ connected components if and only if it has $n-i$ edges.
\end{remark}

\begin{remark} \label{rmk:connection-to-laplacian-results}
Propositions~\ref{prop:coeff} and~\ref{prop:buslov}
are closely related to the all-minors matrix-tree theorem~\cite{chaiken}.
The all-minors matrix-tree theorem is a formula for the minors of the Laplacian matrix of a (weighted, directed) graph $G$, and it is a sum over certain forests in $G$.
\end{remark}

We obtain the following consequence of Proposition~\ref{prop:buslov}:
\begin{proposition} \label{prop:buslov-neg-lap}
Let $L$ be the Laplacian matrix, or its transpose, of a directed graph $G$ with $n$ edges. 
Write the characteristic polynomial of $-L$ as:
	\begin{align*}
	\det (\lambda I + L) ~=~ \lambda^n + e_{n-1} \lambda^{n-1} + \cdots + e_0~.
	\end{align*}
Then 
	\begin{align*}
			e_i ~&=~ 
				 \sum_{F \in \mathcal{F}_{n-i}( G)} \pi_F     \quad \quad  \text{ for $i=0,1,\dots,n-1$. }
	\end{align*}
\end{proposition}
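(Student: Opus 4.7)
The plan is to derive Proposition \ref{prop:buslov-neg-lap} directly from Buslov's Proposition \ref{prop:buslov} via a sign-change substitution in the characteristic polynomial. The key algebraic identity is the standard relation
\[
\det(\lambda I + L) \;=\; (-1)^n \det\!\big((-\lambda)I - L\big) \;=\; (-1)^n\, p_L(-\lambda),
\]
where $p_L(\lambda) := \det(\lambda I - L)$ is the characteristic polynomial of $L$. So the strategy is to start with Buslov's formula for the coefficients of $p_L(\lambda)$ and then read off the coefficients of $\det(\lambda I + L)$ via this substitution.

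First I would handle the case where $L$ is (exactly) the Laplacian of $G$. Writing $p_L(\lambda) = \lambda^n + \sum_{i=0}^{n-1} e'_i \lambda^i$, Proposition \ref{prop:buslov} gives $e'_i = (-1)^{n-i} \sum_{F \in \mathcal{F}_{n-i}(G)} \pi_F$. Substituting $\lambda \mapsto -\lambda$ and multiplying by $(-1)^n$ yields
\[
\det(\lambda I + L) \;=\; \lambda^n + \sum_{i=0}^{n-1} (-1)^{n-i} e'_i\, \lambda^i.
\]
Plugging in Buslov's value of $e'_i$, the two factors of $(-1)^{n-i}$ multiply to $(-1)^{2(n-i)} = 1$, so the coefficient of $\lambda^i$ in $\det(\lambda I + L)$ is exactly $\sum_{F \in \mathcal{F}_{n-i}(G)} \pi_F$, as claimed.

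Next I would dispose of the ``or its transpose'' clause. If $L = M^{T}$ where $M$ is the Laplacian of $G$, then $(\lambda I + L)^{T} = \lambda I + M$, so $\det(\lambda I + L) = \det(\lambda I + M)$, and the formula for $L$ is identical to the formula already established for $M$. Hence the transpose case reduces to the Laplacian case with no additional work.

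There is no genuine obstacle here: the entire argument is sign bookkeeping. The only point that requires care is keeping track of the two sources of $(-1)^{n-i}$ signs (one from Buslov's theorem, one from the $\lambda \mapsto -\lambda$ substitution), which fortunately cancel. The substantive combinatorial content already lives in Proposition \ref{prop:buslov}; this proposition is merely the repackaging needed to match the sign conventions for compartmental matrices (which satisfy $A = -L^{T}$ with $L$ the Laplacian of the leak-augmented graph), as used in the subsequent proof of Proposition \ref{prop:coeff}.
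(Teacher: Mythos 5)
Your proof is correct and follows exactly the same route as the paper: apply Proposition~\ref{prop:buslov}, use the identity $\det(\lambda I + L) = (-1)^n p_L(-\lambda)$ so that the two factors of $(-1)^{n-i}$ cancel, and reduce the transpose case to the Laplacian case via invariance of the determinant under transposition. The paper states this in a single sentence; you have merely written out the sign bookkeeping explicitly.
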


\begin{proof}
Let $p(\lambda):=\det (\lambda I - L)$.
The result follows directly from 
	Proposition~\ref{prop:buslov},
the equality $\det (\lambda I + L) = (-1)^n \cdot p(-\lambda)$, and
(for the case of the transpose) the invariance of the determinant under taking transposes.
\end{proof}

We can now prove Proposition~\ref{prop:coeff}.
\begin{proof}[Proof of Proposition~\ref{prop:coeff}]
Let $A$ be the compartmental matrix of a model $\mathcal{M}$ with $n$ compartments and $k$ leaks (so, $1 \leq k \leq n$). 
	The case of no leaks ($k=0$ and any $n$) is known, by Proposition~\ref{prop:buslov-neg-lap}: $A$ is the transpose of the negative of the Laplacian of the graph $G$ of $\mathcal{M}$.  Also, the case of $k=1$ leak and $n=1$ compartment is straightforward: the compartmental matrix is $A=[a_{01}]$, so $\det (\lambda I -A)= \lambda + a_{01}$, which verifies equation~\eqref{eq:coeff-forest}.

The above cases form the base cases for proving equation~\eqref{eq:coeff-forest} by strong induction on $(n,k)$.  Next we prove the inductive step (i.e., the $(n,k+1)$ case, assuming all smaller cases):

\smallskip
\noindent
{\bf Claim}: Equation~\eqref{eq:coeff-forest} holds for all linear compartmental models with $n$ compartments and $k+1$ leaks (where $n \geq 2$ and $1 \leq k \leq n-1$), assuming that the equation holds for all linear compartmental models with $(\widetilde n, \widetilde k)$ for which either $\widetilde n=n$ and $\widetilde k  \leq k$, or $\widetilde n \leq n-1$ and $\widetilde k \leq \widetilde n$.

Relabel the compartments so that compartment 1 has a leak (labeled by $a_{01}$), and, also, there is a (directed) edge from compartment 1 to compartments $2, 3, \dots, m$ and {\em not} to compartments $m+1, m+2, \dots, n$ (for some $1 \leq m \leq n$).  This  operation permutes the rows and columns of $A$ in the same way, which does not affect the characteristic polynomial $\det (\lambda I - A)$.

We compute as follows:
\begin{align}
\notag
\det (\lambda I - A) &~=~
\det
\left[ 
\begin{array}{c@{}c}
 \left[\begin{array}{c}
         \lambda + (a_{21}+ \cdots + a_{m1})+a_{01} \\      \end{array}\right] & \star  \\     
         \star & \left[\begin{array}{c}
                       \lambda I - M   \\                          \end{array}\right]  \\   
\end{array}\right] \\
\notag 
&~=~ 
\det 
\left[ 
\begin{array}{c@{}c}
 \left[\begin{array}{c}
         \lambda + (a_{21}+ \cdots + a_{m1}) \\      \end{array}\right] & \star  \\     
         \star & \left[\begin{array}{c}
                       \lambda I - M   \\                          \end{array}\right]  \\   
\end{array}\right]
+
\det 
\left[ 
\begin{array}{c@{}c}
 \left[\begin{array}{c}
         a_{01} \\      \end{array}\right] & \star  \\     
         \star & \left[\begin{array}{c}
                       \lambda I - M   \\                          \end{array}\right]  \\   
\end{array}\right]
\\
\label{eq:2-dets}
&~=~ 
\det (\lambda I -N)
+
a_{01} \det (\lambda I -M)~,
\end{align}
where $N$ is the compartmental matrix for the model obtained from $\mathcal{M}$ by removing the leak from compartment 1 (so, this model has $k$ leaks and $n$ compartments), and (by Lemma~\ref{lem:remove-1})
$M$ is the compartmental matrix for an $(n-1)$-compartment model $\mathcal{M}_1$ for which the leak-augmented graph is $\widetilde G_1$.

By the inductive hypothesis, the coefficient of $\lambda ^i$ in the first summand in \eqref{eq:2-dets} is:
	\begin{align} \label{eq:summand-1}
	 \sum_{F \text{ an } {(n-i)}\text{-edge, spanning, incoming forest of } \widetilde G \text{ {\em not} involving } a_{01}} \pi_{F}~. 
	\end{align}

Also by our inductive hypothesis, the coefficient of $\lambda ^i$ in the second summand in \eqref{eq:2-dets} is a sum over $(n-i-1)$-edge forests in $\widetilde G_1$. Specifically, that coefficient is:
	\begin{align} \label{eq:summand-2}
	a_{01}  \sum_{F \in \mathcal{F}_{n-i-1}( \widetilde G_1)} \pi_{F'} ~=~
	 \sum_{F \text{ an } {(n-i)}\text{-edge, spanning, incoming forest of } \widetilde G \text{ that involves } a_{01}} \pi_{F} ~.
	\end{align}
Taken together, equations~\eqref{eq:summand-1} and~\eqref{eq:summand-2} prove the Claim.
\end{proof}

\bibliographystyle{plain}
\bibliography{square}
\end{document}